\newtheorem{theorem}{Theorem}[section]
\newtheorem{lemma}[theorem]{Lemma}
\theoremstyle{definition}
\newtheorem{definition}[theorem]{Definition}
\newtheorem{remark}[theorem]{Remark}
\newtheorem{assumption}[theorem]{Assumption}
\newcommand{\norm}[1]{\left\Vert#1\right\Vert}
\numberwithin{equation}{section}
\begin{document}
\font\nho=cmr10
\def\dive{\operatorname{div}}
\def\cal{\mathcal}
\def\L{\cal L}
\def\M{\cal M}
\def\grad{\operatorname{grad}}
\def \ud{\underline }
\def\id{{\indent }}
\def\f{\frac}
\def\non{{\noindent}}
\def\le{\leqslant} 
\def\leq{\leqslant}
\def\geq{\geqslant} 
\def\rar{\rightarrow}
\def\Rar{\Rightarrow}
\def\ti{\times}
\def\i{\mathbb I}
\def\j{\mathbb J}
\def\si{\sigma}
\def\Ga{\Gamma}
\def\ga{\gamma}
\def\ld{{\lambda}}
\def\Si{\Psi}
\def\f{\mathbf F}
\def\r{\hro{R}}
\def\e{\cal{E}}
\def\B{\cal B}
\def\A{\cal A}
\def\bb{\mathbb B}
\def\p{\mathbb P}
\def\RR{\mathcal{R}}
\def\tet{\theta}
\def\Tet{\Theta}
\def\hro{\mathbb}
\def\ho{\mathcal}
\def\P{\ho P}
\def\E{\mathcal{E}}
\def\n{\mathbb{N}}
\def\dMu{\mathbf{U}}
\def\dMcs{\mathbf{C}}
\def\dMcu{\mathbf{C^u}}
\def\vk{\vskip 0.2cm}
\def\td{\Leftrightarrow}
\def\df{\frac}
\def\Wei{\mathrm{We}}
\def\Rey{\mathrm{Re}}
\def\s{\mathbb S}
\def\l{\mathcal{L}}
\def\C+{C_+([t_0,\infty))}
\def\o{\cal O}
%\def\@frontmatterwidth{16cm}
%\textheight=22cm
%\oddsidemargin=0cm
%\evensidemargin=0cm
%\begin{frontmatter}
\title[AAP-mild solutions for NSE on non-compact manifolds]{On asymptotically almost periodic  mild solutions for Navier-Stokes equations on non-compact Riemannian manifolds}

\author[N.T. Van]{Nguyen Thi Van}
\address{Thi Van Nguyen\hfill\break
Faculty of Computer Science and Engineering, Department of Mathematics, Thuyloi University, 175 Tay Son, Dong Da, Ha Noi, Viet Nam}
\email{van@tlu.edu.vn}

%\author[T.V. Thuy]{ Tran Van Thuy}
%\address{ Tran Van Thuy\hfill\break
%East asia university of Technology,
%Trinh Van Bo street, Nam Tu Liem, Ha Noi, Viet Nam}
%\email{thuytv@eaut.edu.vn}

\author[P.T. Xuan]{Pham Truong Xuan}
\address{Pham Truong Xuan \hfill\break
Thang Long Institute of Mathematics and Applied Sciences (TIMAS), Thang Long University, Nghiem Xuan Yem, Hanoi, Vietnam 
} 
\email{xuanpt@thanglong.edu.vn or phamtruongxuan.k5@gmail.com}

%\author[N.T. Loan]{Nguyen Thi Loan}
%\address{Nguyen Thi Loan\hfill\break Faculty of Fundamental Sciences, Phenikaa University, Hanoi 12116 Vietnam}
%\email{loan.nguyenthi2@phenikaa-uni.edu.vn}

%\author[L.T. Sac]{Le The Sac}
%\address{Le The Sac\hfill\break
%Faculty of Information Technology, Department of Mathematics, Thuyloi University\\
%Khoa Cong nghe Thong tin, Bo mon Toan, Dai hoc Thuy loi, 175 Tay Son, Dong Da, Ha Noi, Viet Nam}
%\email{SacLT@tlu.edu.vn}
\begin{abstract}  
In this paper, we study the existence, uniqueness and asymptotic behaviour of almost periodic and asymptotically almost periodic mild solutions to the incompressible Navier-Stokes equations on $d$-dimensional non-compact manifold $(\mathcal{M},g)$ which satisfies some bounded conditions on curvature tensors. First, we use the $L^p-L^q$-dipsersive and smoothing estimates of the Stokes semigroup to prove Massera-type principles which guarantees the well-posedness of almost periodic and asymptotically almost periodic mild solutions for the inhomogeneous Stokes equations. Then, by using fixed point arguments and Gronwall's inequality we establish the well-posedness and exponential decay for global-in-time of such solutions of Navier-Stokes equations. Our results extend the previous ones \cite{Xuan2022,Xuan2023} to the generalized non-compact Riemannian manifolds.
\end{abstract}

\subjclass[2020]{Primary 35Q30, 34K14, 35B35, 76B03; Secondary 53C25, 35R01, 58J35}

\keywords{Navier-Stokes Equations, Non-compact Riemannian manifolds, Negative Ricci curvature tensors, Almost periodic mild solutions, Asymptotically almost periodic mild solutions, Exponential decay}

\maketitle

\tableofcontents

\section{Introduction}
In the present paper we will study the existence and asymptotic behaviour of almost periodic (AP-) and asymptotically almost periodic (AAP-) mild solutions to incompressible Navier-Stokes equations (NSE) for vector fields on the $d$-dimensional non-compact Riemannian manifold $(\M,g)$ with negative Ricci curvature tensor which satisfies Assumption \ref{assum} below (see Subsection \ref{NSERiemann}).
In particular, denoting $\Gamma(T\M)$ the set of all vector fields on $\M$, we consider NSE for a vector-field-valued mapping $u:\r_+\times \mathcal{M} \to \Gamma(T\mathcal M)$
  \begin{align}\label{NSE1}
\begin{cases}
\partial_t u + \nabla_u u + \nabla \pi = \overrightarrow{\Delta}u + r(u) +  f,\cr
\mathrm{div}u = 0,\cr
u(0,x) = u_0(x) \in \Gamma(T\M) \hbox{ for all } x\in \mathcal{M}
\end{cases}
\end{align}
where $\nabla$ denotes the Levi-Civita connection on $\M$;
 $\pi$ is the pressure; $r(u)$ is Ricci operator (see the definition in the next section); 
 $f$ is external force; and $\overrightarrow{\Delta}$ is Bochner Laplace operator.

First, we recall some results about Navier-Stokes equations on Riemannian manifolds. 
In particular, the non-uniqueness problem of Leray-weak solutions of Navier-Stokes equations was proved by Czubak et al. \cite{Cz1,Cz2,Cz3} on two-dimensional hyperbolic space $\mathbb{H}^2$. Then, this non-uniqueness problem was revisited by Khesin and Misiolek \cite{Khesin2012} in $\mathbb{H}^d$ for $d\geqslant 2$ by using harmonic forms. In particular, they proved that the Navier-Stokes equations have non-uniqueness Leray-weak solutions in $\mathbb{H}^2$ and have uniqueness Leray-weak solutions in $\mathbb{H}^d$ for $d\geqslant 3$. The non-uniqueness problem were extended to study on another three-dimensional non-compact Riemannian manifold by Lichtenfelz \cite{Li2016} by considering Navier-Stokes equations on Anderson's manifold. 
After that, Pierfelice \cite{Pi} provided $L^p-L^q$- dispersive and smoothing estimates for Stokes semigroups on real hyperbolic and generalized non-compact manifolds with negative Ricci curvatures. Then, the author used these estimates and Kato-iteration method (see \cite{Ka}) to prove the existence and uniqueness of strong mild solutions for Navier-Stokes equations (see Section 5 in \cite{Pi}). There are some other works for Navier-Stokes on Riemannian manifolds such as \cite{Fa2018,Fa2020,MiTa2001,Sa,Zha} and references therein.

Concerning the existence and asymptotic behaviour of periodic mild solutions, the existence and stability of periodic solutions to Navier-Stokes equations \eqref{NSE1} on Einstein manifolds with negative curvature tensor were established in \cite{HuyXuan2018}. These results have extended to the case of general non-compact Riemannian manifolds satisfying Assumption \ref{assum} in \cite{HuyXuan2022}. For almost periodic and forward asymptotically almost periodic mild solutions, the recent works \cite{Xuan2022,Xuan2023} provides the fully results on the existence and exponential stability (and exponential decay) for equation \eqref{NSE1} in the spaces $L^p(\Gamma(T\mathbb{H}^d))$ for all $p>1$, where $\mathbb{H}^d$ is the $d$-dimensional real hyperbolic space.

We would like to notice that, the questions on existence and asymptotic behaviours of forward asymptotically almost periodic solutions of Navier-Stokes equations on flat spaces such as the whole Euclidean space $\mathbb{R}^n$ or its unbounded domains remain open sofar. In related work, there is only work done by Farwig and Tanuichi (see \cite{FaTa}) which gives the global uniqueness of the small backward AAP- mild solution in unbounded domains in $\mathbb{R}^n$. The main obstacle is that the heat and Stokes semigroups are polynomial stable on $\mathbb{R}^n$, then the solution operator (see Definition in the proof of Theorem \ref{th1}) can not preserve the asymptotically almost periodic properties of input functions. However, the vectorial heat and Stokes semigroups are exponential stable on the framework of non-compact manifolds with negative Ricci curvatures (see \cite{Pi}). Using this fact, we can prove that the solution operator preserves the asymptotically almost periodic property (see \cite{Xuan2022,Xuan2023} for the case of hyperbolic space).

The purpose of this paper is to extend the results in \cite{Xuan2022} for some non-compact Riemannian manifolds satisfying Assumption \ref{assum} below which require some bounded conditions on curvatures (see the end of Subsection \ref{NSERiemann}). In this geometric framewrork, we will face the obstacles that the Kodaira-Hodge projection $\mathbb{P}$ does not commute with Stokes semigroup $e^{-t\mathcal{A}}$ and the fact that the almost periodic mild solutions is determined on the full line time-axis (see \cite{KoNa} for the initial notion of this type of mild solution). To overcome these obstacles, we introduce and establish the boundedness of mild solutions of inhomogeneous Stokes equations in the phase spaces $\mathcal{X}$ (for AAP-mild solutions) and $\mathbb{X}$ (for AP-mild solutions) which consist $L^p(\Gamma(T\mathcal{M}))\cap L^2(\Gamma(T\mathcal{M}))$ for $p>d$ (see Subsection \ref{main} and Lemma \ref{Thm:linear} for the definitions of $\mathbb{X}$ and $\mathcal{X}$). Then, by using $L^p-L^q$ estimates for Stokes semigroup we prove the Massera-type principles which show the existence of AP- and AAP- mild solutions of inhomogeneous Stokes equations in $\mathcal{X}$ and $\mathbb{X}$, respectively (see Theorem \ref{th1}). Note that, since the curvatures satisfy the bounded and negative conidtions in Assumption \ref{assum} below, the Stokes semigroup is exponential stable and the solution operator can preserve the asymptotic property as $t$ tends to infinity (see Theorem \ref{th1} $ii)$). Combining these results with fixed point argument we establish the existence of such solutions for Navier-Stokes equations \eqref{NSE1} (see Theorem \ref{th2}). Finally, we provide the exponential decay for AP- and AAP- mild solutions by using Gronwall's inequalities (see Theorem \ref{th3}). \\
{\bf Notations.}\\
For a given Banach space $X$ we denote the following Banach spaces:
$$C_b(\r_+, X):=\{h:\r_+\to X \mid h\hbox{ is continuous and }\sup_{t\in\r_+}\|h(t)\|_X<\infty\}$$
endowed with the norm $\|h\|_{\infty, X}=\|h\|_{C_b(\r_+, X)}:=\sup\limits_{t\in\r_+}\|h(t)\|_X$. 

$$C_b(\r, X):=\{h:\r\to X \mid h\hbox{ is continuous and }\sup_{t\in\r}\|h(t)\|_X<\infty\}$$
endowed with the norm $\|h\|_{\infty, X}=\|h\|_{C_b(\r, X)}:=\sup\limits_{t\in\r}\|h(t)\|_X$.
  
\section{Setting of geometric analysis and main results} \label{S2} 
\subsection{Some basic notions on non-compact Riemannian manifolds}\label{NSERiemann}
\noindent
For $2\le d\in\mathbb{N}$, we consider a $d$-dimensional non-compact Riemannian manifold $(\M,g)$ which satisfies the below assumption (which was introduced in \cite{Druet,Pi}):
\begin{assumption}\label{assum}
Assume that $(\mathcal{M},g)$ is a  smooth, complete, non-compact, simply connected Riemannian manifold  satisfying the following properties
\begin{itemize}
\item[$(H_1)$] $|\RR| + |\nabla \RR| + |\nabla^2 \RR| \leq K \, ,$
\item[$(H_2)$] $-\dfrac{1}{c_0}g \leq \mathrm{Ric} \leq -c_0 g \, ,$ for some $c_0$ positive,
\item[$(H_3)$] $\kappa <0$,
\item[$(H_4)$] $\inf_{x\in \mathcal{M}} r_x >0$,
\end{itemize}
where $r_x$ is the injectivity radius for the exponential map at $x$.
\end{assumption} 
\noindent
{\bf Examples:} There are several Riemannian manifolds satisfying the above hypotheses $(H_1)-(H_4)$ such as 
real hyperbolic manifolds,  non-compact Einstein manifolds with negative Ricci curvature tensors  (\cite{Hel,Jo}), Damek-Ricci manifolds (\cite{Da}) and symmetric manifolds of non-compact types (\cite{Er, Hel}).

We refer the reader to \cite{Jo,He} for basic notions and operators on Riemannian manifolds. %For details on Navier-Stokes equations on Riemannian manifolds we refer to \cite{EbiMa,Pi,Tay} and references therein. 
For convenience, in what follows, we recall only some notions on differential operators on Riemannian manifolds.  We denote the Levi-Civita connection by $\nabla$ and the set of all vector fields on $\M$ by $\Gamma(T\M)$. For $X\in \Gamma(T\M)$ we can extend  $\nabla_X$ to arbitrary $(p,q)$
  tensor by requiring
  \begin{enumerate}
  \item $\nabla_X(c(S)) = c(\nabla_X S)$ for any contraction $c$,
  \item $\nabla_X(S \otimes T) = \nabla_X S \otimes T + S \otimes \nabla_X T$
\end{enumerate}   
where we take the convention that $\nabla_X f = X\cdot f$ for a function $f:\M\to \r$. 

In particular, for 
$S\in \Gamma(\otimes^p(T\M)\otimes^q
(T^*\M))$  we get 
$$(\nabla_X S)(X_1, \cdots, X_q ) = \nabla_X(S(X_1, \cdots, X_q ))-
S(\nabla_XX_1,\cdots, X_q))-\cdots-S(X_1, \cdots,\nabla_X X_q).$$
Moreover, we define the covariant derivatives $\nabla$ on tensor field 
$S\in \Gamma(\otimes^p(T\M)\otimes^q(T^*\M))$ 
by
$$\nabla S(X, X_1, \cdots, X_q) = (\nabla _X S)(X_1, \cdots, X_q),
\hbox{ hence, }\nabla S\in \Gamma(\otimes^p(T\M)\otimes^{q+1}(T^*\M).$$
Next, we recall the ``musical isomorphisms'' on Riemannian manifolds. 
For a 1-form $w$, we define the vector field $w^\sharp$	 by
$$g(w^\sharp, Y ) = w(Y), \forall Y \in \Gamma(T\M)$$
whereas, for a vector field $X$, we define the 1-form $X^\flat$ by
$$X^\flat(Y) = g(X, Y ), \forall Y \in \Gamma(T\M).$$
The metric on 1-forms  can then be defined by setting
$$g(w, \eta) := g(w^\sharp, \eta^\sharp), \forall w, \eta \in \Gamma(T^*\M).$$
The Riemannian gradient of a function is then defined as
$$\grad p = (dp)^\sharp.$$
More generally, for $(p,q)$-tensor field $ T\in \Gamma(\otimes^p(T\M)\otimes^q(T^*\M))$
  we have
\begin{eqnarray*}
T^\sharp& = &C^2_1(g^{-1}\otimes T) \in \Gamma(\otimes^{p+1}(T\M)\otimes^{q-1}(T^*\M)),\cr
T^\flat & = &C^1_2(g \otimes  T ) \in \Gamma(\otimes^{p-1}(T\M)\otimes^{q+1}(T^*\M)),\cr
\dive T &= &C^1_1\nabla T \in \Gamma(\otimes^{p-1}(T\M)\otimes^{q}(T^*\M))
\end{eqnarray*}
where $C^i_j$ stands for the contraction of the $i$ and $j$ indices for tensors.

{ Next, the Laplace-Beltrami operator $\Delta_g$ applying on functions is defined as 
$$\Delta_g(f)=\dive\grad f =\frac{1}{\sqrt{|g|}}\frac{\partial}{\partial x^j}\left(\sqrt{|g|}g^{ij}\frac{\partial f}{\partial x^i}\right)\hbox{ for a function }f :\mathcal{M} \to \r,$$
where $|g| = \mathrm{det}\,g$.}
 
Furthermore,  the vectorial Laplacian $L$ is defined by the stress tensor (see \cite{EbiMa,Tay}) :
$$Lu = \dive(\nabla u + \nabla u^t)^{\sharp}.$$
%where $\omega^{\sharp}$ is a vector field associated with the 1-form $\omega$ by
%$$g(\omega^{\sharp},Y) = \omega(Y) \, \forall Y \in \Gamma(TM).$$
Since $\dive u=0$ we can express $L$ in the following formula
$$Lu = \overrightarrow{\Delta}u + r(u),$$
where $\overrightarrow{\Delta}$ is the Bochner-Laplacian
$$\overrightarrow{\Delta}u =- \nabla^*\nabla u= \mathrm{Tr}_g(\nabla^2u)$$
%In local coordinates one has $\mathrm{Tr}_g(\nabla^2u) = g^{ij}\nabla^2u(e_i,e_j)$ for the
 %basis
 %$\left\{e_i = \dfrac{\partial}{\partial x_i} \right\}$. 
 and $r(\cdot)$ is the Ricci operator related to the Ricci curvature tensor by the formula
$$r(u) = (\mathrm{Ric}(u,\cdot))^{\sharp} \hbox{ for all }u\in\Gamma({T\M})$$
where the Ricci curvature tensor is defined by
$$\mathrm{Ric}(X,Y)=\sum_{i=1}^dg(\mathcal{R}(X,e_i)Y,e_i)\hbox{ for all }X, Y\in\Gamma({T\M})$$
for the standard basis $\{e_i=\frac{\partial}{\partial x_i}\}_{i=1}^d$ and $\mathcal{R}$ being the curvature tensor on $M$ defined by $$\mathcal{R}(X, Y )Z :=
 -\nabla_X (\nabla_Y Z)+
\nabla_Y (\nabla_X Z)+\nabla_{[X,Y ]}Z \hbox{ for all }X, Y, Z  \in \Gamma(T\M).$$
Moreover, the sectional
curvature $\kappa$ is  defined as $\kappa(X,Y):=\frac{\RR(X,Y,X,Y)}{g(X,X)g(Y,Y)-g(X,Y)^2} $ for all $X, Y\in T_x\M$.

\subsection{Almost periodic and asymptotically almost periodic functions}
\noindent

Let $X$ be a Banach space, we recall the definitions of almost periodic and forward asymptotically almost periodic functions which have the { values} in $X$ (for details see \cite{Che}).
\begin{definition}
A function  $h \in C_b(\r, X )$ is called almost periodic function if for each $ \epsilon  > 0$, there exists $l_{\epsilon}>0 $ such that every interval of length $l_{\epsilon}$ contains at least a number $T $ with the following property
\begin{equation}
 \sup_{t \in \r } \| h(t+T)  - h(t) \|_X < \epsilon.
\end{equation}
The set of all almost periodic functions $h:\r \to X $ is denoted by $AP(\r,X)$ which is { also} a Banach space endowed with the norm 
$$\|h\|_{ AP(\r,X)}=\sup_{t\in\r}\|h(t)\|_X.$$
\end{definition}
In order to introduce the asymptotically almost periodic functions, we define the space  $C_0 (\r_+,X)$, as the set of all forward asymptotic and continuous functions $\varphi: \r_+ \to X$ { satisfying}
$$\lim_{t \to +\infty } \| \varphi(t) \|_X = 0.$$
Clearly, $C_0 (\r_+,X)$  is a Banach space endowed with the norm 
$$\|\varphi\|_{C_0 (\r_+,X)}=\sup_{t\in\r_+}\|\varphi(t)\|_X.$$
\begin{definition} 
A continuous function  $f \in C(\r_+, X )$ is said to be forward asymptotically almost periodic (in the rest we call simply by asymptotically almost periodic) if there { exists two functions}  $h \in AP(\r,X)$ and $ \varphi\in C_0(\r_+,X)$ such that
\begin{equation}
f(t) = h(t) + \varphi(t).
\end{equation}
We denote $AAP(\r_+, X)= \{f:\r_+ \to X \mid f\hbox{ is asymptotically almost periodic on $\r_+$}\}$. Observe that $AAP(\r_+,X)$ is a Banach space endowed with the norm 
$$\|f\|_{ AAP(\r_+,X)}=\|h\|_{ AP(\r, X)}+\|\varphi\|_{ C_0(\r_+,X)}.$$
\end{definition}
Note that, a periodic function or an almost periodic function is also asymptotically almost periodic, but the vice versa is not always true. For instance, let $g\in C_b(\mathbb{R},X)$, then $f(t) = \left(\sin t+ \sin \sqrt{2}t + \dfrac{1}{\sqrt t}\right)g(t)$ is an asymptotically almost periodic function in $AAP(\mathbb{R}_+,X)$ but is not periodic or almost periodic.

\subsection{Navier-Stokes equations and main results}\label{main}
\noindent

We consider the existence, uniqueness and asymptotic behaviour of almost periodic and asymptotically almost periodic mild solutions to the Cauchy problem for the incompressible Navier-Stokes equations \eqref{NSE1} on $\mathcal{M}$.
Using the Kodaira-Hodge operator $\mathbb{P} = I + \mathrm{grad}(-\Delta_g)^{-1} \mathrm{div}$ we can get rid of the pressure term $\pi$ and then obtain from \eqref{NSE1} that (see \cite{Pi,Tay}): 
\begin{align}\label{DDivNavierStokes}
\begin{cases}
\partial_t u - (\overrightarrow{\Delta}u + r(u) + G(u)) = \mathbb{P}[-\nabla_uu+f],\cr
\dive u = 0,\cr
u(0,x)= u_0(x) \in \Gamma(TM)\hbox{ for all }x\in\mathcal{M}; \dive u_0=0,
\end{cases}
\end{align}
where $G(u)=2\mathrm{grad}(-\Delta_g)^{-1}\dive(r(u))$ with $\Delta_g$ being the Laplace-Beltrami operator. The corresponding inhomogeneous Stokes equation takes the form 
\begin{align}\label{CCauchyStokes}
\begin{cases}
\partial_t u &= -\mathcal{A}u + \mathbb{P}[-\nabla_vv+f],\cr
u(0,x) &= u_0(x) \in \Gamma(T\mathcal{M}) \hbox{ for all }x\in\mathcal{M}
\end{cases}
\end{align}
for given vector-field valued mappings $v(t,\cdot)$ and $f(t,\cdot)\in  \Gamma(T\mathcal M)$,
where 
$\mathcal{A}u  = -(\overrightarrow{\Delta}u + r(u) + G(u))$, and $e^{-t\A}$ is denoted  the semigroup associated with the homogeneous Cauchy problem:  
\begin{align}\label{CauchyStokes0}
\begin{cases}
\partial_t u &= -\mathcal{A}u,\cr
u(0,x) &= u_0(x) \in \Gamma(T\mathcal{M}) \hbox{ for all }x\in\mathcal{M}, 
\end{cases}
\end{align}
\textit{i.e.}, the unique solution of the above Cauchy problem is given by $u(t)=e^{-t\A}u_0$. Here, traditionally, we denote $u(t)$ for $u(t,\cdot)$.
\begin{remark}
Note that, the operator $\overrightarrow{\Delta} + r + G$ does not commute with the Kodaira-Hodge operator
$\mathbb{P} = I + \mathrm{grad}(-\Delta_g)^{-1} \mathrm{div}$ on the generalized non-compact manifolds with all the conditions $(H_1)-(H_4)$. However, in the case of Einstein manifolds, these operator are commutated (see \cite{Pi}). 
\end{remark} 
  
To state our main results we need the following notions of mild solutions:
\begin{definition}
\noindent
\begin{itemize}
	\item[(i)]  By a \textit{mild solution on the half line time-axis} to system \eqref{CCauchyStokes}, we mean
	a mapping $u:\r_+\times \mathcal{M} \to \Gamma(T\mathcal M)$
	which satisfies  the integral equation
	\begin{equation}\label{mild:linear}
		u(t) = e^{-t\mathcal{A}}u_0 + \int_0^te^{-(t-\tau)\mathcal{A}} \mathbb{P} [-\nabla_vv+f] (\tau) d\tau \hbox{ for }t\ge 0.
	\end{equation}
 \item[(ii)]  By a \textit{mild solution on the whole line time-axis} to the first equation of system \eqref{CCauchyStokes} (we do not consider the initial data), we mean
 a mapping $u:\r \times \mathcal{M} \to \Gamma(T\mathcal M)$
 which satisfies  the integral equation
 \begin{equation}\label{mild:linear}
 	u(t) =   \int_{-\infty}^te^{-(t-\tau)\mathcal{A}} \mathbb{P} [-\nabla_vv+f] (\tau) d\tau.
 \end{equation}
\end{itemize}
\end{definition}

Since, the Stokes operator $\overrightarrow{\Delta} + r + G$ does not commute with the Kodaira-Hodge operator
$\mathbb{P} = I + \mathrm{grad}(-\Delta_g)^{-1} \mathrm{div}$ on a generalized non-compact manifold,
we establish the existence and stability of the asymptotically almost periodic mild solutions of equations \eqref{DDivNavierStokes} and \eqref{CCauchyStokes} on the following Banach space
\begin{eqnarray}\label{deftXX}
\cal X&:=&\big\{ u \in C_b(\r_+, (L^p\cap L^2)(\Gamma(T\cal M))), \nabla u \in C_b(\r_+, L^{\tilde{p}}(\Gamma(T\cal M)) \cap L^s(\Gamma(T\cal M))) \mid \cr 
&&\hbox{ The function } t\mapsto  \| u(t)\|_{L^p\cap L^2 } + [c_d(t)]^{- \left(\frac{1}{p}-\frac{1}{\tilde{p}}+\frac{1}{d} \right)}\| \nabla u(t)\|_{L^{\tilde p}} \cr
&&\hspace{4cm} +  [c_d(t)]^{- \left(\frac{1}{p}-\frac{1}{s}+\frac{1}{d} \right)}\| \nabla u(t)\|_{L^s}   \hbox{ belongs to } L^\infty({\r_+}) \big\}
\end{eqnarray}
endowed with the norm
$$ \| u \|_{\cal X} := \sup_{t\in \mathbb{R}_+} (\| u(t)\|_{L^2} + \| u(t)\|_{L^p} + [c_d(t)]^{- \left(\frac{1}{p}-\frac{1}{\tilde{p}}+\frac{1}{d} \right)}\| \nabla u(t)\|_{L^{\tilde p}} + [c_d(t)]^{- \left(\frac{1}{p}-\frac{1}{s}+\frac{1}{d} \right)} \| \nabla u(t)\|_{L^s}),$$
whereas $c_d(t): = C_0\max \left\{t^{-\frac d2},1 \right\}$.

Also, we consider the existence and stability of the  almost periodic mild solutions of equations \eqref{DDivNavierStokes} and \eqref{CCauchyStokes} without initial data condition on the following Banach space
\begin{eqnarray}\label{deftXX2}
	\mathbb X&:=&\{u \in C_b(\r, (L^p \cap L^2)  (\Gamma(T\cal M))), \nabla u \in C_b(\r, L^{\tilde{p}}(\Gamma(T\cal M)) \cap L^s(\Gamma(T\cal M)))\mid \cr
	&&\hbox{ The function } t\mapsto \| u(t)\|_{L^p\cap L^2 }  +  \tilde{\lambda}(t) \| u(t)\|_{L^{\tilde{p}}}+\hat{ \lambda}(t) \| u(t)\|_{L^{s}} \cr
	&&\hspace{9cm} \hbox{ belongs to }   L^\infty({\r}) \big\}
\end{eqnarray}
endowed with the norm
$$ \| u \|_{\mathbb X} := \sup_{t\in \mathbb{R}} \{\| u(t)\|_{L^p} +\|u(t)\|_{L^2}    + \tilde{\lambda}(t) \| u(t)\|_{L^{\tilde{p}}}+\hat{ \lambda}(t) \| u(t)\|_{L^{s}}\},$$ 
here the notation $(L^p\cap L^2) (\Gamma(T\cal M))$ stands for $L^p(\Gamma(T\cal M))\cap L^2(\Gamma(T\cal M)) $ and  $\|\cdot\|_p:= \| \cdot\|_{L^p(\Gamma(T\cal {M}))}$.
Here we assume that $d < p<s^{\prime} <\tilde{p} < s$ with $\frac{1}{2}=\frac{1}{p}+\frac{1}{\tilde{p}}$,  whereas $c_d(t): = C_0\max \left\{t^{-\frac d2},1 \right\}$ and  $$
 \tilde{\lambda}(t)= \begin{cases}
[c_d(t)]^{- \left(\frac{1}{p}-\frac{1}{\tilde{p}}+\frac{1}{d} \right)}, \text{ if } t\ge 0,  \cr
[c_d(|t|)]^{- \left(\frac{1}{p}-\frac{1}{s^{\prime}}+\frac{1}{d} \right)},\text{ if } t< 0; 
   \end{cases}
\;\hat{\lambda}(t)=\begin{cases}
	[c_d(t)]^{- \left(\frac{1}{p}-\frac{1}{s}+\frac{1}{d} \right)}, \text{ if } t\ge 0,  \cr
	[c_d(|t|)]^{- \left(\frac{1}{p}-\frac{1}{s^{\prime}}+\frac{1}{d} \right)},\text{ if } t< 0.
\end{cases} $$
\begin{remark}
If we consider the Navier-Stokes equations on Einstein manifolds, then the space $\mathcal{X}$ and $\mathbb{X}$ reduce to a simple space $C_b(\r_+,L^p(\Gamma(T\mathcal{M})))$ and $C_b(\r,L^p(\Gamma(T\mathcal{M})))$, respectively (see \cite{Xuan2022}). 
\end{remark}

We now state our first main result on the existence of mild solutions to Stokes equations \eqref{CCauchyStokes} in the following theorem: 
\begin{theorem}\label{th1} 	Let $(\mathcal{M},g)$ be a $d$-dimensional non-compact manifold  with negative Ricci curvature tensor and $p>d$.	
	\begin{itemize}
		\item[(i)]  Suppose that $v\in \mathbb{X}$ and the external force $f \in C_b(\r,L^p(\Gamma(T\cal M))\cap L^2(\Gamma(T\cal M)))$ are almost periodic functions.
		Then, problem \eqref{CCauchyStokes} (without the initial data $u_0$) on the full line time-axis has one and only one  almost periodic mild solution $\hat{u}\in \mathbb{X}$ satisfying 
	\begin{equation*}\label{eesper}
			\|\hat{u}\|_{\mathbb{X} } \le  \widetilde{C}\|f \|_{\infty,L^p\cap L^2} + \widetilde{M} \| v\|^2_{\mathbb X}.
	\end{equation*}
		\item[(ii)] Suppose that $v\in \mathcal{X}$ and the external force $f \in C_b(\r_+ ,L^p(\Gamma(T\cal M))\cap L^2(\Gamma(T\cal M)))$ are asymptotic almost periodic functions.
		Then, problem \eqref{CCauchyStokes} has one and only one  asymptotic almost periodic mild solution $\hat{u}\in \mathcal{X}$ satisfying 
		\begin{equation*}\label{eesper}
			\|\hat{u}\|_{\mathcal{X}} \le \widetilde{C} \left(\|u_0 \|_{L^p\cap L^2} + \|f \|_{\infty,L^p\cap L^2}\right) + \widetilde{M} \| v\|^2_{\cal X}.
		\end{equation*}
	\end{itemize}
\end{theorem}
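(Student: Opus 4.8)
The plan is to treat both parts by the same mechanism, using the variation-of-constants formula together with the $L^p$-$L^q$ dispersive and smoothing estimates for the Stokes semigroup $e^{-t\mathcal{A}}$ recalled from \cite{Pi} (exponential decay in $t$, together with the smoothing bound $\|\nabla e^{-t\mathcal{A}}w\|_{L^q}\lesssim [c_d(t)]^{-(\frac1r-\frac1q+\frac1d)}e^{-\delta t}\|w\|_{L^r}$ for the relevant exponents). The key point forced on us by the non-Einstein geometry is that $\mathbb{P}$ does not commute with $e^{-t\mathcal{A}}$, so the nonlinear/forcing term must be kept in the form $e^{-(t-\tau)\mathcal{A}}\mathbb{P}[-\nabla_vv+f](\tau)$ and estimated as a whole; I would first record the boundedness of $\mathbb{P}$ on $L^p\cap L^2$ for $1<p<\infty$ (or cite it from \cite{Pi}) and the bilinear estimate $\|\mathbb{P}\,\nabla_vv(\tau)\|_{L^r}\lesssim \|v(\tau)\|_{L^{\tilde p}}\|\nabla v(\tau)\|_{L^{?}}$ via Hölder, choosing the exponents consistent with $d<p<s'<\tilde p<s$ and $\frac12=\frac1p+\frac1{\tilde p}$ so that the resulting time weights are integrable against the semigroup kernel.

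For part (ii) I would define the solution operator
\[
(\mathcal{S}\,w)(t) := e^{-t\mathcal{A}}u_0 + \int_0^t e^{-(t-\tau)\mathcal{A}}\mathbb{P}[-\nabla_vv+f](\tau)\,d\tau,
\]
and verify in three steps that it lands in $\mathcal{X}$ with the asserted bound. First, the linear/source part: split $f=h+\varphi$ with $h\in AP(\r,L^p\cap L^2)$ and $\varphi\in C_0$; the convolution of the exponentially stable semigroup with an $AP$ function is $AP$ (a standard Bohr-type argument, or cite \cite{Xuan2022}), while the convolution with a $C_0$ function stays in $C_0$ because the exponential kernel "forgets" the far past — this is exactly where $(H_2)$--$(H_3)$ (negative Ricci, $\kappa<0$) are used. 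Second, the same for the $\nabla$-components, now invoking the smoothing estimates, which is why the weights $[c_d(t)]^{-(\frac1p-\frac1{\tilde p}+\frac1d)}$ appear in the definition of $\mathcal{X}$: one checks $\int_0^t [c_d(t-\tau)]^{-\alpha}e^{-\delta(t-\tau)}\,d\tau<\infty$ uniformly in $t$ for the relevant $\alpha<1$, using $c_d(t)=C_0\max\{t^{-d/2},1\}$ so that near $\tau=t$ the singularity is $(t-\tau)^{d\alpha/2}$-integrable precisely when $\alpha<2/d$, which holds for $p>d$. Third, the bilinear term $\int_0^t e^{-(t-\tau)\mathcal{A}}\mathbb{P}\nabla_vv\,d\tau$ is estimated by $\widetilde{M}\|v\|_{\mathcal{X}}^2$ using the same convolution lemmas applied to the Hölder bound above; one also checks it preserves $AAP$ because the product of two $AAP$ functions (suitably weighted) is $AAP$ and the convolution argument again applies. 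Part (i) is the same computation on the whole line with the integral $\int_{-\infty}^t$, where $AP$-ness of the whole-line convolution is cleaner (no $C_0$ piece, no initial term), and the two-sided weights $\tilde\lambda,\hat\lambda$ are designed to make the $\int_{-\infty}^t$ convolution converge for negative times as well.

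Uniqueness in each space follows by subtracting two solutions $\hat u_1,\hat u_2$ corresponding to the same data: the difference satisfies the homogeneous equation (same $v$, so the bilinear term cancels exactly since it depends only on $v$, not on the unknown), hence $\hat u_1-\hat u_2 = e^{-t\mathcal{A}}(\hat u_1(0)-\hat u_2(0))$ in case (ii) — which vanishes since both have initial data $u_0$ — and in case (i) the whole-line bounded mild solution of $\partial_t u=-\mathcal{A}u$ is forced to be $0$ by exponential stability (let the lower limit go to $-\infty$).

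\medskip

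The step I expect to be the main obstacle is verifying that the solution operator genuinely preserves the $AAP$ (resp. $AP$) structure \emph{in the weighted $\nabla$-norms}, not just in $L^p\cap L^2$: because $\mathbb{P}$ and $e^{-t\mathcal{A}}$ do not commute, one cannot first project and then use scalar heat-semigroup almost-periodicity; instead one must show directly that $t\mapsto [c_d(t)]^{-\alpha}\nabla\!\int_0^t e^{-(t-\tau)\mathcal{A}}\mathbb{P}g(\tau)\,d\tau$ decomposes into an $AP$ part plus a $C_0$ part, which requires carefully matching the time-weight $[c_d(t)]^{-\alpha}$ against the internal convolution and controlling the contribution of $\tau$ near $t$ (the singular end) uniformly — a delicate point since that weight blows up as $t\to 0^+$ but is bounded for $t\ge 1$, so the argument must be done separately on $[0,1]$ and $[1,\infty)$ and then glued. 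The choice $d<p<s'<\tilde p<s$ with $\frac12=\frac1p+\frac1{\tilde p}$ is exactly what makes all the exponents $\alpha$ occurring strictly below the critical value $2/d$, so once the bookkeeping is set up the integrals converge; getting that bookkeeping right is the bulk of the work.
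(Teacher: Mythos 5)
Your proposal is correct and follows essentially the same route as the paper: the boundedness of the solution operator in $\mathbb X$ (resp.\ $\mathcal X$) via the Duhamel formula, the $L^p$--$L^q$ dispersive/smoothing estimates and the weighted convolution integrals; AP-preservation on the whole line by shifting by an $\epsilon$-almost period of the pair $(v,f)$; and AAP-preservation on the half line by splitting the data into an AP part plus a $C_0$ part, with the whole-line convolution of the AP part giving the AP component and the remaining terms (including $e^{-t\mathcal A}u_0$ and the mixed bilinear pieces) decaying by exponential stability. The "main obstacle" you flag — carrying the AP/$C_0$ decomposition through the weighted $\nabla$-norms with the $[c_d(t)]^{-\alpha}$ bookkeeping — is exactly what the paper's Claims 1--3 and its Appendix handle, so no essential idea is missing.
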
 

Analogously to the case of Stokes equations,  by a \emph{mild solution} 
to equation \eqref{DDivNavierStokes} we mean the vector-field-valued map  $u:\r_+\times \mathcal{M} \to \Gamma(T\mathcal M)$
satisfying  the integral equation  
\begin{equation}\label{MildS}
u(t) = e^{-t\mathcal{A}}u_0 + \int_0^te^{-(t-\tau)\mathcal{A}} 
 \mathbb{P}[(-\nabla_uu + f)(\tau)] d\tau\hbox{ for }t\ge 0.
\end{equation}
And by a \emph{mild solution} 
to equation \eqref{DDivNavierStokes} on the full time line, we mean the vector-field-valued map  $u:\r\times \mathcal{M} \to \Gamma(T\mathcal M)$
satisfying  the integral equation  
\begin{equation}\label{MildS}
	u(t) = \int_{-\infty}^te^{-(t-\tau)\mathcal{A}} 
	\mathbb{P}[(-\nabla_uu + f)(\tau)] d\tau\hbox{ for }t\in \mathbb R.
\end{equation}
We then state our second main result on the existence and uniqueness  of the periodic mild solution to \eqref{DDivNavierStokes}
 in the following theorem. %(see Theorem \ref{thm2.20} in Section \ref{S4}):
\begin{theorem}\label{th2}
Let $(\cal M,g)$ be a $d$-dimensional non-compact manifold with the negative Ricci curvature tensor and $p>d$.
\begin{itemize}
	\item[(i)] Assume that $f\in C_b(\r,L^p(\Gamma(T\cal M))\cap L^2(\Gamma(T\cal M)))$ be almost periodic. Then,  if $\|f\|_{\infty, L^p\cap L^2}$ is sufficiently small,  the equation \eqref{DDivNavierStokes} (without initial data $u_0$) has one and only one  almost periodic mild solution $\hat{u}$ on a small ball of  $\mathbb X$.
	\item[(ii)] Assume that $f\in C_b(\r_+,L^p(\Gamma(T\cal M))\cap L^2(\Gamma(T\cal M)))$ be asymptotically almost periodic. Then,  if $\|f\|_{\infty, L^p\cap L^2}$ is sufficiently small,  the equation \eqref{DDivNavierStokes} has one and only one asymptotically almost periodic mild solution $\hat{u}$ on a small ball of  $\cal X$.
\end{itemize}  

\end{theorem}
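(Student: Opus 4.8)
The plan is to obtain $\hat u$ as a fixed point of the \emph{solution operator} associated with the inhomogeneous Stokes problem \eqref{CCauchyStokes}, whose well-posedness is exactly Theorem \ref{th1}. I describe part (i) in detail; part (ii) runs in parallel, with $\mathbb{X}$ replaced by $\mathcal{X}$, the full line by $\mathbb{R}_+$, ``almost periodic'' by ``asymptotically almost periodic'', Theorem \ref{th1}(i) by Theorem \ref{th1}(ii), and with the (implicit) additional requirement that $\|u_0\|_{L^p\cap L^2}$ be small, since that norm enters the a priori bound of Theorem \ref{th1}(ii). Given an almost periodic $v\in\mathbb{X}$, Theorem \ref{th1}(i) supplies a unique almost periodic $\Phi(v):=\hat u\in\mathbb{X}$ solving
\[
\Phi(v)(t)=\int_{-\infty}^{t}e^{-(t-\tau)\mathcal{A}}\,\mathbb{P}\bigl[-\nabla_v v+f\bigr](\tau)\,d\tau,\qquad t\in\mathbb{R},
\]
together with the bound $\|\Phi(v)\|_{\mathbb{X}}\le\widetilde{C}\|f\|_{\infty,L^p\cap L^2}+\widetilde{M}\|v\|_{\mathbb{X}}^{2}$. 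By the definition of mild solution, a fixed point of $\Phi$ is an almost periodic mild solution of \eqref{DDivNavierStokes} in the sense of \eqref{MildS}; and since $\Phi$ sends almost periodic inputs to almost periodic outputs, the Picard iterates $\Phi^{n}(0)$ (starting from the zero function, which lies in $\mathbb{X}$) are all almost periodic, hence so is their uniform limit — so it suffices to run the contraction in the ambient Banach space $\mathbb{X}$.

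Next I would fix $\rho:=2\widetilde{C}\|f\|_{\infty,L^p\cap L^2}$ and impose the smallness condition $\|f\|_{\infty,L^p\cap L^2}<(4\widetilde{C}\widetilde{M})^{-1}$, so that $\widetilde{M}\rho<\tfrac12$. For $v$ in the closed ball $B_\rho:=\{v\in\mathbb{X}:\|v\|_{\mathbb{X}}\le\rho\}$ the bound above gives $\|\Phi(v)\|_{\mathbb{X}}\le\widetilde{C}\|f\|_{\infty,L^p\cap L^2}+\widetilde{M}\rho^{2}=\tfrac{\rho}{2}+(\widetilde{M}\rho)\rho\le\rho$, hence $\Phi(B_\rho)\subseteq B_\rho$. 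For the contraction the external force cancels in $\Phi(v_1)-\Phi(v_2)$, and writing $\nabla_{v_1}v_1-\nabla_{v_2}v_2=\nabla_{v_1}(v_1-v_2)+\nabla_{v_1-v_2}v_2$ leaves two terms of the form $\int_{-\infty}^{t}e^{-(t-\tau)\mathcal{A}}\mathbb{P}[\nabla_{w}z]\,d\tau$. The estimates that yield the quadratic bound $\widetilde{M}\|v\|_{\mathbb{X}}^{2}$ in Theorem \ref{th1} are in fact bilinear; precisely,
\[
\Bigl\|\int_{-\infty}^{t}e^{-(t-\tau)\mathcal{A}}\,\mathbb{P}\bigl[\nabla_{w}z\bigr](\tau)\,d\tau\Bigr\|_{\mathbb{X}}\le\widetilde{M}\,\|w\|_{\mathbb{X}}\,\|z\|_{\mathbb{X}}\qquad(w,z\in\mathbb{X}),
\]
so applying it twice gives $\|\Phi(v_1)-\Phi(v_2)\|_{\mathbb{X}}\le\widetilde{M}(\|v_1\|_{\mathbb{X}}+\|v_2\|_{\mathbb{X}})\|v_1-v_2\|_{\mathbb{X}}\le2\widetilde{M}\rho\|v_1-v_2\|_{\mathbb{X}}$ with $2\widetilde{M}\rho<1$. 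Thus $\Phi$ is a contraction of the complete metric space $B_\rho$, and the Banach fixed point theorem produces the unique $\hat u\in B_\rho$ with $\Phi(\hat u)=\hat u$, which is the asserted mild solution, unique on the small ball.

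I expect the only genuine obstacle to be the bilinear estimate in the \emph{weighted} norms defining $\mathbb{X}$ (resp. $\mathcal{X}$), which is where the geometry enters. The plan there is: split the time-convolution $\int_{-\infty}^{t}$ (resp. $\int_{0}^{t}$) into a near-diagonal piece and a tail — and, for $\mathcal{X}$, additionally into $\tau\ge0$ and $\tau<0$, which is exactly where the exponential decay of $e^{-t\mathcal{A}}$ on these manifolds and the two-sided weights $\tilde\lambda,\hat\lambda$ are used; bound $\mathbb{P}[\nabla_w z](\tau)$ in $L^q$ for $q\in\{2,p,\tilde p,s\}$ via Hölder's inequality for the product $\nabla_w z$ together with the $L^p$--$L^q$ boundedness of $\mathbb{P}$ (recall that here $\mathbb{P}$ does \emph{not} commute with $e^{-t\mathcal{A}}$), the exponent bookkeeping being governed by $d<p<s'<\tilde p<s$ and $\tfrac12=\tfrac1p+\tfrac1{\tilde p}$; and finally check that each weighted factor $[c_d(\cdot)]^{-(\cdots)}$ times the $L^p$--$L^q$ smoothing rate of $e^{-(t-\tau)\mathcal{A}}$ is integrable in $\tau$ and bounded in $t$. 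Since all of this is already carried out in the proof of Theorem \ref{th1}, no new ingredient is needed here, and Theorem \ref{th2} reduces to the routine fixed point argument above.
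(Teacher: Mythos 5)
Your proposal is correct and follows essentially the same route as the paper: a Banach fixed point argument for $\Phi(v)(t)=\int e^{-(t-\tau)\mathcal A}\mathbb P[-\nabla_v v+f](\tau)\,d\tau$ on a small ball of $\mathbb X$ (resp.\ $\mathcal X$), with invariance of the ball coming from the quadratic bound of Theorem \ref{th1} and the contraction estimate $\|\Phi(v_1)-\Phi(v_2)\|\le 2\widetilde M\rho\|v_1-v_2\|$ obtained from the same bilinear decomposition $\nabla_{v_1}v_1-\nabla_{v_2}v_2=\nabla_{v_1-v_2}v_1+\nabla_{v_2}(v_1-v_2)$. The only cosmetic difference is that the paper runs the iteration directly in the closed ball of almost periodic (resp.\ asymptotically almost periodic) elements of the phase space, whereas you recover the almost periodicity of the fixed point from the Picard iterates; both are valid.
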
 

Lastly, our third main result on the exponential stability of the almost periodic and asymptotically almost periodic mild solution is stated as follows: 
\begin{theorem}\label{th3} Assume that the external force $f(t)=0$. Then, the almost periodic (resp. asymptotically almost periodic) mild solution  $\hat{u}$ of equation \eqref{DDivNavierStokes} is exponentially stable in the sense that for any $t>1$,
\begin{equation*}\label{sstasol}
\|\hat{u}(t)\|^{\blacklozenge} \lesssim  e^{-\gamma t},
\end{equation*}
here we denote
$$\|\hat{u}(t)\|^{\blacklozenge} := \|\hat{u}(t)\|_2 + \|\hat{u}(t)\|_{L^p} + [c_d(t)]^{- \left(\frac{1}{p}-\frac{1}{\tilde{p}}+\frac{1}{d} \right)}\| \nabla \hat{u}(t)\|_{L^{\tilde p}} + [c_d(t)]^{-\left( \frac{1}{p} - \frac{1}{s}+\frac{1}{d} \right)} \| \nabla \hat{u}(t)\|_{L^s};$$
whereas $\gamma$ is a positive constant satisfying 
$0<\gamma<\beta$.
\end{theorem}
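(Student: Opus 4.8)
The plan is to combine the integral representation of the mild solution with the $L^p$--$L^q$ dispersive and smoothing estimates of the Stokes semigroup $e^{-t\mathcal A}$ together with a Gronwall argument, following the scheme of \cite{Xuan2022,Xuan2023} but carried out componentwise in the four quantities that enter $\|\cdot\|^{\blacklozenge}$. Since $f\equiv 0$, the solution $\hat u$ obtained in Theorem \ref{th2} satisfies, for $t>1$,
\begin{equation*}
\hat u(t) = e^{-(t-1)\mathcal A}\hat u(1) - \int_1^t e^{-(t-\tau)\mathcal A}\,\mathbb P[\nabla_{\hat u}\hat u](\tau)\,d\tau .
\end{equation*}
The exponential stability of the vectorial Stokes semigroup on manifolds satisfying Assumption \ref{assum} (from \cite{Pi}, which underlies Theorem \ref{th1}) gives bounds of the type $\|e^{-t\mathcal A}w\|_{L^q}\lesssim e^{-\beta t}[c_d(t)]^{-(\frac1p-\frac1q)}\|w\|_{L^p}$ and the analogous smoothing bound for $\|\nabla e^{-t\mathcal A}w\|_{L^q}$ with the extra $[c_d(t)]^{-1/d}$ factor; the Kodaira--Hodge projection $\mathbb P$ and the auxiliary operator $G$ are bounded between the relevant $L^q$-spaces (as already used to set up $\mathcal X$ and $\mathbb X$). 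First I would set $\phi(t):=e^{\gamma t}\|\hat u(t)\|^{\blacklozenge}$ for a fixed $\gamma\in(0,\beta)$ and estimate each of the four terms of $\|\hat u(t)\|^{\blacklozenge}$ by applying the above semigroup estimates to the two pieces of the Duhamel formula, using the bilinear estimate on $\mathbb P[\nabla_{\hat u}\hat u]$ in terms of $\|\hat u\|_{L^p}\|\nabla\hat u\|_{L^{\tilde p}}$ (valid because $\frac1p+\frac1{\tilde p}=\frac12$ and $p>d$), exactly as in the fixed-point estimates behind Theorem \ref{th2}.

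The key steps, in order, are: (1) record the semigroup decay/smoothing estimates and the boundedness of $\mathbb P$, $G$, specialising the time-weights to the exponents $p,2,\tilde p,s$ appearing in $\|\cdot\|^{\blacklozenge}$; (2) bound the linear term $e^{-(t-1)\mathcal A}\hat u(1)$ in each of the four norms, producing a factor $e^{-\beta(t-1)}$ times the appropriate power of $c_d(t-1)$, and check that after multiplying by $e^{\gamma t}$ and dividing by the weights in $\|\cdot\|^{\blacklozenge}$ this stays bounded for $t>1$ (here one uses $\gamma<\beta$ and that $c_d$ is comparable on $[t-1,t]$ up to constants for $t>1$, since $c_d$ is eventually constant); (3) bound the Duhamel integral: split $\int_1^t = \int_1^{t/2}+\int_{t/2}^t$ if needed, apply the bilinear estimate, and absorb the time-singularities at $\tau\to t$ (the factors $[c_d(t-\tau)]^{-(\cdots)}$, which near $\tau=t$ behave like $(t-\tau)^{-\theta}$ with $\theta<1$ by the choice $d<p<s'<\tilde p<s$) against the exponential $e^{-\beta(t-\tau)}$, obtaining something like $\int_1^t e^{-\beta(t-\tau)}(t-\tau)^{-\theta}\,\|\hat u(\tau)\|_{L^p}\|\nabla\hat u(\tau)\|_{L^{\tilde p}}\,d\tau$; (4) feed in $\|\hat u(\tau)\|_{L^p}\|\nabla\hat u(\tau)\|_{L^{\tilde p}}\le \varepsilon_0\,[c_d(\tau)]^{(\frac1p-\frac1{\tilde p}+\frac1d)}\|\hat u(\tau)\|^{\blacklozenge}$ where $\varepsilon_0$ is small because $\|\hat u\|_{\mathcal X}$ (resp.\ $\|\hat u\|_{\mathbb X}$) is small by Theorem \ref{th2}, rewrite everything in terms of $\phi$, and arrive at an inequality of the form $\phi(t)\le C + C\varepsilon_0\int_1^t \psi(t-\tau)\phi(\tau)\,d\tau$ with $\psi\in L^1(\r_+)$; (5) conclude $\sup_{t>1}\phi(t)<\infty$ by a (singular-kernel) Gronwall/Gr\"onwall--Bellman argument, equivalently by a contraction/bootstrap on $C_b([1,\infty))$ since $C\varepsilon_0\|\psi\|_{L^1}<1$ for $\|f\|$ small, which is precisely $\|\hat u(t)\|^{\blacklozenge}\lesssim e^{-\gamma t}$.

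The main obstacle I expect is step (3)--(4): making the bookkeeping of the four different time-weights consistent. One must verify that for each of the four target norms the exponents produced by the semigroup smoothing estimate, minus the weight built into $\|\cdot\|^{\blacklozenge}$, leaves an integrable singularity $(t-\tau)^{-\theta}$, $\theta<1$, at $\tau=t$ — this is exactly where the ordering $d<p<s'<\tilde p<s$ with $\frac12=\frac1p+\frac1{\tilde p}$ is used, and it is the same computation that makes the fixed-point map in Theorem \ref{th2} well-defined, so it can largely be quoted. A secondary subtlety is that $c_d(t)=C_0\max\{t^{-d/2},1\}$ is only eventually constant, so all the "$c_d(t-\tau)\simeq c_d(t)\simeq 1$" comparisons must be justified for $t>1$ (hence the hypothesis $t>1$ in the statement) and the small-$\tau$ part of the integral, where $c_d$ is genuinely singular, handled separately using that it is integrable near $0$. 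Once these weight computations are in place, the exponential factor $e^{-\beta(t-\tau)}$ dominates and the Gronwall step is routine; the decay rate $\gamma$ can be taken to be any constant in $(0,\beta)$, as claimed.
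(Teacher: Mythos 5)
Your proposal follows essentially the same route as the paper: Duhamel's formula, the dispersive/smoothing estimates of Lemma \ref{estimates}, the bilinear bound on $\mathbb P[\nabla_{\hat u}\hat u]$ with one factor absorbed into the small norm $\|\hat u\|_{\mathcal X}$ (resp.\ $\|\hat u\|_{\mathbb X}$), and a Gronwall argument applied to $e^{\gamma t}$ times the relevant norms. The only cosmetic differences are that you start the Duhamel representation at $t=1$ and run a single Gronwall estimate on the full quantity $\|\cdot\|^{\blacklozenge}$, whereas the paper starts at $t=0$ and treats $\|\hat u\|_{L^p},\|\hat u\|_{L^2}$ and the weighted gradient norms in two successive Gronwall steps; both yield the same conclusion.
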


We will give the proofs of our main results in Section \ref{S3}.

\section{Proofs of the main results}\label{S3}

In this section, we will give the proofs of our three main results stated in Subsection \ref{main}. We first prove the existence of bounded mild solutions to linear inhomogeneous Stokes
 equations \eqref{CCauchyStokes} with bounded external forces. 
 To that purpose we need the $L^p-L^q$ - dispersive and smoothing estimates of the semigroup $e^{-t\A}$. These estimates have been proved for semigroups on noncompact manifolds satisfying $(H_1)-(H_4)$ by Pierfelice in \cite[Corollary 4.13 and Theorem 4.15]{Pi}.  We recall these dipsersive and smoothing estimates in the following lemma:
\begin{lemma}\label{estimates}
Assuming $(H_1)-(H_4)$, putting $c_d(t) = C_0\max \left( \frac{1}{t^{d/2}},1 \right)$. There exist $\beta\geq C_0>0$ and some $C>0$ such that the solution of \eqref{CauchyStokes0} satisfies the following dispersive and smoothing estimates
\begin{itemize}
\item[i)] For $2\leq p \leq r <+\infty$ and for all $u_0 \in L^p(\Gamma(T\mathcal{M})) \cap L^2(\Gamma(T\mathcal{M})),$ 
\begin{equation}\label{dispersive0}
\left\| e^{-t\mathcal{A}} u(t)\right\|_{L^r} \leqslant C[c_d(t)]^{\frac{1}{p}-\frac{1}{r}}e^{-\beta t} \left( \left\| u_0 \right\|_{L^p} + \left\| u_0 \right\|_{L^2} \right),\, \forall t>0,
\end{equation}
\begin{equation}\label{smoothing0}
\left\| \nabla e^{-t\mathcal{A}}u(t) \right\|_{L^r} \leqslant C[c_d(t)]^{\frac{1}{p}-\frac{1}{r}+\frac{1}{d}}e^{-\beta t} \left( \left\| u_0 \right\|_{L^p} + \left\| u_0 \right\|_{L^2} \right), \, \forall t>0. 
\end{equation}

\item[ii)] For $1 < p \leq 2 \leq r < +\infty$ and for all $u_0 \in L^p(\Gamma(T\mathcal{M})),$ 
\begin{equation}\label{dispersive}
\left\| e^{-t\mathcal{A}} u(t)\right\|_{L^r} \leqslant C[c_d(t)]^{\frac{1}{p}-\frac{1}{r}}e^{-\beta t}\left\| u_0 \right\|_{L^p}, \, \forall t>0 ,
\end{equation}
\begin{equation}\label{smoothing}
\left\| \nabla e^{-t\mathcal{A}}u(t) \right\|_{L^r} \leq C[c_d(t)]^{\frac{1}{p}-\frac{1}{r}+\frac{1}{d}}e^{-\beta t} \left\| u_0 \right\|_{L^p}, \, \forall t>0.
\end{equation}
\end{itemize}
\end{lemma}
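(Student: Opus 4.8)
Since the statement merely recalls Pierfelice's dispersive and smoothing bounds, it suffices in principle to invoke \cite[Corollary 4.13 and Theorem 4.15]{Pi}; for completeness I indicate the route by which \eqref{dispersive0}--\eqref{smoothing} would be derived from $(H_1)$--$(H_4)$. The plan is to reduce the analysis of $e^{-t\mathcal{A}}$ to heat-kernel bounds for the Bochner Laplacian and then to absorb the Ricci operator $r$ and the nonlocal term $G$ as controlled perturbations. I would start from the $L^2$ decay. Writing $\mathcal{A}u=-(\overrightarrow{\Delta}u+r(u)+G(u))$ and using $\overrightarrow{\Delta}=-\nabla^*\nabla$ together with $(H_2)$, one computes for divergence-free $u$
\[
\mathrm{Re}\,\langle\mathcal{A}u,u\rangle_{L^2}=\|\nabla u\|_{L^2}^2-\int_{\mathcal M}\mathrm{Ric}(u,u)\,d\mathrm{vol}-\mathrm{Re}\,\langle G(u),u\rangle_{L^2}\ge c_0\|u\|_{L^2}^2,
\]
where the crucial simplification is that $\langle G(u),u\rangle_{L^2}=-2\langle(-\Delta_g)^{-1}\dive(r(u)),\dive u\rangle_{L^2}=0$ because $\dive u=0$ and $-\dive$ is the formal adjoint of $\grad$. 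This coercivity gives the $L^2$ spectral gap, $\|e^{-t\mathcal{A}}\|_{L^2\to L^2}\le e^{-\beta t}$ with $\beta\ge c_0>0$, which is exactly the origin of the hypothesis $\beta\ge C_0>0$.

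Next I would establish the short-time $L^p$--$L^r$ smoothing. Hypotheses $(H_1)$ and $(H_4)$ endow $\mathcal M$ with bounded geometry, so the Bochner-Laplacian heat kernel enjoys Gaussian upper bounds and hence the ultracontractive estimate $\|e^{t\overrightarrow{\Delta}}\|_{L^p\to L^r}\lesssim t^{-\frac d2\left(\frac1p-\frac1r\right)}$ for $0<t\le1$, with the gradient version gaining one half-derivative, $\|\nabla e^{t\overrightarrow{\Delta}}\|_{L^p\to L^r}\lesssim t^{-\frac12-\frac d2\left(\frac1p-\frac1r\right)}$. The bounded Ricci term (by $(H_2)$) and the nonlocal term $G$ are then incorporated through a Duhamel/Trotter expansion, which requires the $L^p$-boundedness of $\grad(-\Delta_g)^{-1}\dive$ for $1<p<\infty$ via Calderón--Zygmund theory on manifolds of bounded geometry. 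Combining the short-time powers with the $L^2$ decay of the first step through the semigroup law $e^{-t\mathcal{A}}=e^{-(t-1)\mathcal{A}}e^{-\mathcal{A}}$ yields the factor $e^{-\beta t}$ with bounded constant for large $t$; the two regimes are packaged precisely by $c_d(t)=C_0\max(t^{-d/2},1)$. The case distinctions $2\le p\le r$ versus $1<p\le2\le r$ in (i) and (ii) reflect whether the $L^2$-norm of $u_0$ must be retained separately (when $p\ge2$, since $L^p\not\hookrightarrow L^2$ smoothingly) or is already subsumed by routing $L^p\to L^2\to L^r$ (when $p\le2$).

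The gradient estimates \eqref{smoothing0} and \eqref{smoothing} follow by the same scheme: commuting $\nabla$ through the flow produces curvature terms that are bounded by $(H_1)$ via the Bochner formula, and local parabolic regularity supplies the extra $t^{-1/2}$, which is exactly the exponent $[c_d(t)]^{1/d}$ appearing on the right-hand side. \textbf{The main obstacle} is the treatment of the nonlocal term $G$ away from $L^2$: although the divergence-free constraint annihilates $G$ in the $L^2$ energy identity, on $L^p$ with $p\ne2$ one must control $G$ through the $L^p$-boundedness of the Kodaira--Hodge-type singular integral $\grad(-\Delta_g)^{-1}\dive$ and verify that the resulting Duhamel perturbation series converges while preserving \emph{both} the dispersive power $t^{-\frac d2\left(\frac1p-\frac1r\right)}$ \emph{and} the exponential rate $\beta$. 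The non-self-adjointness and nonlocality of $\mathcal{A}$ make this step delicate, and it is precisely the technical heart of \cite[Corollary 4.13 and Theorem 4.15]{Pi}.
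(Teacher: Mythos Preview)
Your proposal is correct: the paper does not prove this lemma at all, but simply states it as a recollection of \cite[Corollary 4.13 and Theorem 4.15]{Pi}, which is exactly the citation you invoke at the outset. Your additional sketch of the argument (the $L^2$ coercivity from $(H_2)$ and the vanishing of $\langle G(u),u\rangle_{L^2}$, the short-time ultracontractivity from bounded geometry, and the Duhamel treatment of the nonlocal term $G$) goes beyond what the paper provides and is a reasonable outline of Pierfelice's proof, but strictly speaking none of it is needed here since the lemma is quoted, not proved.
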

 
The following lemma gives us the boundedness (in time) of mild solutions to \eqref{CCauchyStokes} for each bounded external force.
\begin{lemma}\label{Thm:linear} Consider Stokes equation \eqref{CCauchyStokes} on $d$-dimensional non-compact manifold $(\mathcal{M},g)$ with negative Ricci curvature tensor and $p>d$.
\begin{itemize}
\item[(i)]   Let $u_0 \in L^p (\Gamma (T\cal M))\cap L^2 (\Gamma (T\cal M))$ and suppose that $v \in \cal X$, $f\in C_b(\mathbb{R}_+, (L^p\cap L^2)(\Gamma(T\cal M)))$. 
Then, the problem \eqref{CCauchyStokes} has one and only one mild solution $u\in \cal X$ given by the formula \eqref{mild:linear} with $u(0)=u_0$. Furthermore, we have
\begin{equation}\label{rgl}
\| u \|_{\cal X} \leq \widetilde{C} (\| u_0\|_{L^p\cap L^2} + \| f\|_{\infty,L^p\cap L^2}) + \widetilde{M} \| v\|^2_{\cal X}
\end{equation}
for some constants $\widetilde{C}$ and $\widetilde{M}$ independent of $u_0,\ u,\, v$ and $f$.
\item[(ii)]   Let   $v \in \mathbb X$ and $f\in C_b(\mathbb{R}, L^p(\Gamma(T\cal M)))\cap L^2(\Gamma(T\cal M)))$. Then, the problem \eqref{CCauchyStokes} (without initial data $u_0$) on the full line time-axis has one and only one mild solution $u\in \mathbb X$ given by the formula \eqref{mild:linear}. Furthermore, we have
\begin{equation}\label{rgl}
\| u \|_{\mathbb X} \leq \widetilde{C}\|  f\|_{\infty,L^p\cap L^2} + \widetilde{M} \| v\|^2_{\mathbb X}
\end{equation}
for some constants $\widetilde{C}$ and $\widetilde{M}$ as Assertion (i).
\end{itemize}
 
\end{lemma}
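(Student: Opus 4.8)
With $v$ and $f$ fixed, the first equation of \eqref{CCauchyStokes} is \emph{linear} in $u$, and a mild solution is \emph{by definition} the right-hand side of the Duhamel formula \eqref{mild:linear}; so ``existence and uniqueness'' really means showing that this formula converges, defines an element of $\mathcal{X}$ (resp.\ $\mathbb{X}$) and satisfies \eqref{rgl} --- uniqueness being then automatic. The plan is to split
\[
u(t)=e^{-t\mathcal{A}}u_0+\int_0^t e^{-(t-\tau)\mathcal{A}}\mathbb{P}f(\tau)\,d\tau-\int_0^t e^{-(t-\tau)\mathcal{A}}\mathbb{P}\,\nabla_v v(\tau)\,d\tau=:u_{\mathrm{lin}}(t)+u_f(t)+u_{nl}(t)
\]
(in part (ii) there is no $u_{\mathrm{lin}}$ and $\int_0^t$ becomes $\int_{-\infty}^t$), and to estimate, one at a time, each of the quantities entering $\|\cdot\|_{\mathcal{X}}$ --- $\|\cdot\|_{L^2}$, $\|\cdot\|_{L^p}$, $[c_d(t)]^{-(\frac1p-\frac1{\tilde p}+\frac1d)}\|\nabla\cdot\|_{L^{\tilde p}}$, $[c_d(t)]^{-(\frac1p-\frac1s+\frac1d)}\|\nabla\cdot\|_{L^s}$ --- and, for $\mathbb{X}$, their analogues with $\tilde\lambda,\hat\lambda$. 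The tools are Lemma \ref{estimates}, the $L^q$-boundedness of $\mathbb{P}$ for $1<q<\infty$ (see \cite{Pi}), the identity $\nabla_v v=\mathrm{div}(v\otimes v)$ (from $\mathrm{div}\,v=0$) and the pointwise bound $|\nabla_v v|\le|v|\,|\nabla v|$.

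For $u_{\mathrm{lin}}$ and $u_f$ I would apply \eqref{dispersive0}--\eqref{smoothing0} with \emph{source exponent $p$}: the $c_d$-power then vanishes for the $L^p$- and $L^2$-outputs and equals exactly the phase-space weights for the two gradient outputs, so after $\|\mathbb{P}f(\tau)\|_{L^p\cap L^2}\lesssim\|f\|_{\infty,L^p\cap L^2}$ every integrand is $\lesssim e^{-\beta(t-\tau)}(\cdot)$ or $\lesssim[c_d(t-\tau)]^{\frac1p-\frac1r+\frac1d}e^{-\beta(t-\tau)}(\cdot)$, $r\in\{\tilde p,s\}$. Since $\int_0^t e^{-\beta\sigma}\,d\sigma\le\beta^{-1}$ and $\int_0^t[c_d(\sigma)]^{\frac1p-\frac1r+\frac1d}e^{-\beta\sigma}\,d\sigma$ is finite \emph{uniformly in $t$} precisely because $(\frac1p-\frac1r+\frac1d)\frac d2<1$ when $p>d$ (split $\int_0^1$, where the singularity is integrable, from $\int_1^\infty$, where $c_d\equiv C_0$), and since $[c_d(t)]^{\frac1p-\frac1r+\frac1d}$ is bounded below by a positive constant, the contributions of $u_{\mathrm{lin}}$ and $u_f$ are bounded by $\widetilde{C}(\|u_0\|_{L^p\cap L^2}+\|f\|_{\infty,L^p\cap L^2})$.

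The nonlinear term $u_{nl}$ is the delicate one. For the $L^p$- and $L^2$-components I would use $\nabla_v v=\mathrm{div}(v\otimes v)$, transfer the single derivative onto the semigroup via the smoothing estimate, and control $\|v\otimes v(\tau)\|_{L^q}=\|v(\tau)\|_{L^{2q}}^2\lesssim\|v\|_{\mathcal{X}}^2$ (with $2q\in[2,p]$, interpolating $L^2$ and $L^p$); the resulting $c_d$-power is $\frac1q-\frac1r+\frac1d=\frac1p+\frac1d$ (or $\le\frac1d$), whose half-$d$-multiple is $<1$ iff $p>d$, so again the $\tau$-integral is uniformly bounded. For the two \emph{gradient} components one cannot afford to move \emph{both} derivatives onto the kernel (that would leave the factor $[c_d(t-\tau)]^{\frac1q-\frac1r+\frac2d}$, which is time-integrable only for $q>r$, incompatible with $q\le2<r$); instead I keep $\nabla_v v$ as a function and apply Hölder to $|\nabla_v v(\tau)|\le|v(\tau)|\,|\nabla v(\tau)|$. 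With the pair $(p,\tilde p)$ --- here the defining relation $\frac12=\frac1p+\frac1{\tilde p}$ enters, and this is \emph{why $L^2$ belongs to the phase space} --- one gets $\|\nabla_v v(\tau)\|_{L^2}\lesssim\|v(\tau)\|_{L^p}\|\nabla v(\tau)\|_{L^{\tilde p}}\lesssim[c_d(\tau)]^{\frac1p-\frac1{\tilde p}+\frac1d}\|v\|_{\mathcal{X}}^2$, and with the pair $(p,s)$, $\|\nabla_v v(\tau)\|_{L^m}\lesssim[c_d(\tau)]^{\frac1p-\frac1s+\frac1d}\|v\|_{\mathcal{X}}^2$ where $\frac1m=\frac1p+\frac1s$. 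Feeding these into \eqref{dispersive0}--\eqref{smoothing0} (source exponent $2$, resp.\ $m$) reduces the gradient estimates of $u_{nl}$ to convolutions $\int_0^t[c_d(t-\tau)]^{\alpha}e^{-\beta(t-\tau)}[c_d(\tau)]^{\gamma}\,d\tau$ with $\alpha=\frac1p+\frac1d$ and $\gamma\in\{\frac1p-\frac1{\tilde p}+\frac1d,\ \frac1p-\frac1s+\frac1d\}$; the hypothesis $p>d$ (together with the chain $d<p<s'<\tilde p<s$ and $\frac12=\frac1p+\frac1{\tilde p}$, which guarantees the same for all the auxiliary pairings, including the $s'$-exponents used on the negative half-line) makes both $\alpha\frac d2<1$ \emph{and} $\gamma\frac d2<1$, so the integral converges; splitting $\int_0^t=\int_0^{t/2}+\int_{t/2}^t$ for large $t$, the factor $e^{-\beta t/2}$ absorbs the at most linear growth of $\int_1^{t/2}[c_d(\tau)]^{\gamma}\,d\tau$, and one gets a bound $\lesssim[c_d(t)]^{\gamma}$ uniformly in $t$. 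Dividing by $[c_d(t)]^{\gamma}$ yields $\widetilde{M}\|v\|_{\mathcal{X}}^2$, completing \eqref{rgl}.

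It remains to note that $t\mapsto u(t)$ is continuous (and $t\mapsto\nabla u(t)$ is continuous off $t=0$, which is all the weighted norm sees) by strong continuity of $e^{-t\mathcal{A}}$ and dominated convergence, so $u\in\mathcal{X}$; uniqueness of the solution with $u(0)=u_0$ is immediate from the explicit formula. Part (ii) is the same computation with $\int_{-\infty}^t$ (convergent thanks to $e^{-\beta(t-\tau)}$), the only extra work being that $\tilde\lambda,\hat\lambda$ are piecewise, so one splits the $\tau$-integral at $0$ and uses the $s'$-exponent on $(-\infty,0)$, checking via the chain $d<p<s'<\tilde p<s$ that every $c_d$-power that appears still stays below $2/d$. \emph{The hard part} is exactly the nonlinear gradient estimate: recognising that one must not unload both derivatives onto the heat-type kernel, but rather carry $\nabla_v v(\tau)$ as an honest $L^2$- (or $L^m$-) valued function at the price of the time-weight $[c_d(\tau)]^{\frac1p-\frac1{\tilde p}+\frac1d}$, and then verifying that the resulting doubly-singular convolution stays $t$-uniformly bounded --- which is precisely what pins down the admissible ranges of $p,\tilde p,s,s'$.
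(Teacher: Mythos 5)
Your overall strategy coincides with the paper's: assertion (i) is quoted there from \cite{HuyXuan2022}, and for (ii) the paper estimates each component of the phase-space norm separately, using the dispersive/smoothing estimates of Lemma \ref{estimates} together with H\"older applied to $|\nabla_v v|\le|v|\,|\nabla v|$ with exactly your two pairings --- $(p,\tilde p)$ with $\tfrac12=\tfrac1p+\tfrac1{\tilde p}$ to land in $L^2$, and $(p,s)$ with $\tfrac1r=\tfrac1p+\tfrac1s$ to land in $L^r$ --- and then verifies the $t$-uniform boundedness of the resulting weighted convolutions ($G_1,G_2,G_{31},G_{32},G_{41},G_{42}$ in the Appendix, with the case split at $0$ and the $s'$-exponent on the negative half-line, as you describe). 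Your identification of the gradient components as the delicate ones, and of the reason why both derivatives must not be unloaded onto the kernel, matches the paper's treatment.

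The one genuine gap is your handling of the $L^2$- and $L^p$-components of $u_{nl}$: you propose to write $\nabla_v v=\operatorname{div}(v\otimes v)$ and ``transfer the single derivative onto the semigroup via the smoothing estimate.'' Lemma \ref{estimates} only provides bounds for $\nabla e^{-t\mathcal{A}}$ acting \emph{after} the semigroup, i.e.\ $\|\nabla e^{-t\mathcal{A}}u_0\|_{L^r}$; it gives no estimate of the form $\|e^{-t\mathcal{A}}\mathbb{P}\operatorname{div}F\|_{L^r}\lesssim [c_d(t)]^{\alpha}e^{-\beta t}\|F\|_{L^q}$. Obtaining such a bound by duality or integration by parts against the kernel is not immediate in this setting, precisely because $\mathbb{P}$ does not commute with $e^{-t\mathcal{A}}$ on these manifolds (a point the paper emphasizes) and because $\mathcal{A}$ contains the nonlocal term $G$. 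As written, that step would fail for lack of a tool. The fix is simply to treat these two components the same way you (and the paper) treat the gradient components: keep $\nabla_v v$ as a function, bound $\|\nabla_v v(\tau)\|_{L^2}\le\|v(\tau)\|_{L^p}\|\nabla v(\tau)\|_{L^{\tilde p}}$ and $\|\nabla_v v(\tau)\|_{L^r}\le\|v(\tau)\|_{L^p}\|\nabla v(\tau)\|_{L^s}$, and apply the dispersive estimate from $L^r\cap L^2$ into $L^p$ (note $2<r<p$, so the relevant $c_d$-power is $\tfrac1r-\tfrac1p=\tfrac1s$, whose $\tfrac d2$-multiple is $<1$). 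With that replacement your argument is complete and agrees with the paper's.
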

\begin{proof}
\def\A{\mathcal{A}}
The proof of Assertion (i) is done in the recent work \cite[Lemma 3.2]{HuyXuan2022}. Therefore, we prove only Assertion (ii) which consists the boundedness of AAP- mild solution on the full line time-axis. For the sake of convenience, we denote $\|\cdot\|_{p}:=\|\cdot\|_{L^p(\Gamma(T\mathcal{M}))}$. Note that, on Riemannian non-compact manifold with negative curvature tensor we have the $L^p-$boundedness of Riesz transform (see \cite{Loho}), therefore the operator $\mathbb{P}$ is bounded. 
 
Using the first inequality in assertion $i)$ of Lemma \ref{estimates}, one has
\begin{eqnarray*}
\| u(t)\|_2 &\leqslant&   \int_{-\infty}^t \|e^{-(t-\tau)\mathcal{A}} \mathbb{P} [\nabla_vv+f] (\tau)\|_2 d\tau   \le   \int_{-\infty}^t e^{-\beta(t-\tau)} \| \nabla_v v(\tau) + f (\tau)\|_2d\tau\cr
&\leqslant&    C\int_{-\infty}^te^{-\beta (t-\tau)} \left( \| v(\tau)\|_p \| \nabla v(\tau)\|_{\tilde p} + \|f\|_2 \right) d\tau  \cr
&\leqslant&  C\int_{-\infty}^t e^{-\beta(t-\tau)}d\tau \| f\|_{\infty,L^2} + \int_{-\infty}^t e^{-\beta (t-\tau)}   \left( \tilde{\lambda}(\tau) \right)^{-1}  d\tau \| v\|^2_{\mathbb X}\cr 
&\leqslant&  \frac{C}{\beta}\| f\|_{\infty,L^2} + G_1(t)\norm{v}^2_{\mathbb X} \le  \frac{C}{\beta}\| f\|_{\infty,L^p\cap L^2} + M_1 \| v\|^2_{\mathbb X},
\end{eqnarray*}
here the constant $M_1$ is given by (see Appendix) 
$$G_1(t) := \int_{-\infty}^t e^{-\beta (t-\tau)}  \left( \tilde{\lambda}(\tau) \right)^{-1} d\tau\le M_1<+\infty.$$

By putting $\dfrac{1}{r}=\dfrac{1}{p}+ \dfrac{1}{s}$ and using again the first inequality in item $i)$ of Lemma \ref{estimates}, we have
\begin{eqnarray*}
\|u(t)\|_p &\le &  C\int_{-\infty}^t e^{-\beta(t-\tau)}\| f(\tau)\|_{L^p\cap L^2} d\tau \cr 
&+& \int_{-\infty}^t [c_d(t-\tau)]^{\frac{1}{r}-\frac{1}{p}}e^{-\beta(t-\tau)} (\| \nabla_vv(\tau)\|_r + \| \nabla_vv(\tau)\|_2)d\tau\cr
&\leq&   C\| f\|_{\infty,L^p \cap L^2}\int_{-\infty}^t e^{-\beta(t-\tau)} d\tau\cr
&&+  \int_{-\infty}^t [c_d(t-\tau)]^{\frac{1}{s}}e^{-\beta (t-\tau)} (\|v(\tau)\|_p\|\nabla v(\tau) \|_s + \| v(\tau)\|_p\| \nabla v(\tau)\|_{\tilde{p}}) d\tau \cr
&\le&  \frac{C}{\beta} \| f\|_{\infty,L^p\cap L^2}  + \int_{-\infty}^t [c_d(t-\tau)]^{\frac{1}{s}}\left(  \left[ \hat{ \lambda}(\tau)\right]^{-1} +\left[ \tilde{ \lambda}(\tau)\right]^{-1}  \right)e^{-\beta(t-\tau)} d\tau \| v \|^2_{\mathbb X}\cr
&\leq&   \frac{C}{\beta} \| f\|_{\infty,L^p\cap L^2} +  M_2 \| v \|^2_{\mathbb X},
\end{eqnarray*}
here, we use the boundedness of following integral (see Appendix)
$$G_2(t) := \int_{-\infty}^t [c_d(t-\tau)]^{\frac{1}{s}}\left(  \left[ \hat{ \lambda}(\tau)\right]^{-1} +\left[ \tilde{ \lambda}(\tau)\right]^{-1}  \right)e^{-\beta(t-\tau)} d\tau\le M_2<+\infty.$$
 
{Using the second inequality of Lemma \ref{estimates}, we obtain the estimates for $L^{\tilde p}$-norm of the covariant derivative $\nabla u(t)$ as follows:}
\begin{eqnarray*}
\tilde{ \lambda}(t)\|\nabla u(t)\|_{\tilde{p}} &\le&  \tilde{ \lambda}(t)\int_{-\infty}^t  [c_d(t-\tau)]^{\left( \frac{1}{p}-\frac{1}{\tilde p}+ \frac{1}{d}\right)}e^{-\beta(t-\tau)}(\| f(\tau)\|_2 + \| f(\tau)\|_p )d\tau\cr
	&+& \tilde{ \lambda}(t)\int_{-\infty}^t [c_d(t-\tau)]^{\frac{1}{r}-\frac{1}{\tilde p} + \frac{1}{d}}e^{-\beta(t-\tau)} \left(\| \nabla_vv(\tau)\|_r+\| \nabla_vv(\tau)\|_2\right) d\tau \cr
&\le& \tilde{ \lambda}(t)\int_{-\infty}^t  \left[ c_d(t-\tau)\right] ^{\left( \frac{1}{p}-\frac{1}{\tilde p}+ \frac{1}{d}\right)}e^{-\beta(t-\tau)}(\| f(\tau)\|_2 + \| f(\tau)\|_p )d\tau\cr
	&+&  \tilde{ \lambda}(t)\int_{-\infty}^t [c_d(t-\tau)]^{\frac{1}{r} -\frac{1}{\tilde{p}}+\frac{1}{d}}e^{-\beta(t-\tau)}\cr &&\hspace{5cm}(\|v(\tau)\|_p\|\nabla v(\tau) \|_s + \| v(\tau)\|_p\| \nabla v(\tau)\|_{\tilde{p}}) d\tau\cr
&\le& \tilde{ \lambda}(t)\int_{-\infty}^t  [c_d(t-\tau)]^{\left( \frac{1}{p}-\frac{1}{\tilde p}+ \frac{1}{d}\right)}e^{-\beta(t-\tau)}(\| f(\tau)\|_2 + \| f(\tau)\|_p )d\tau\cr
	&+& \tilde{ \lambda}(t)\int_{-\infty}^t [c_d(t-\tau)]^{\frac{1}{r}-\frac{1}{\tilde p} + \frac{1}{d}}e^{-\beta(t- \tau)}\left(  \left[ \hat{ \lambda}(\tau)\right]^{-1} +\left[ \tilde{ \lambda}(\tau)\right]^{-1}  \right) d\tau \| v \|^2_{\mathbb X} \cr
&\leq&   G_{31}(t)\| f\|_{\mathbb X} +  G_{32}(t) \| v \|^2_{\mathbb X}, 
\end{eqnarray*}
where  $G_{31}, G_{32}$ are bounded (see Appendix) with
\begin{eqnarray*}
&&G_{31}(t) := \tilde{ \lambda}(t)\int_{-\infty}^t [c_d(t-\tau)]^{\left( \frac{1}{p}-\frac{1}{\tilde p}+ \frac{1}{d}\right)}e^{-\beta(t-\tau)} d\tau\le M_{31}<+\infty,\cr
&&G_{32}(t) := \tilde{ \lambda}(t)\int_{-\infty}^t [c_d(t-\tau)]^{\frac{1}{r}-\frac{1}{\tilde p} + \frac{1}{d}}e^{-\beta(t- \tau)}\left(  \left[ \hat{ \lambda}(\tau)\right]^{-1} +\left[ \tilde{ \lambda}(\tau)\right]^{-1}  \right) d\tau\le M_{32}<+\infty.   
\end{eqnarray*}

Using again the second inequality in Lemma \ref{estimates}, we obtain the estimates for $L^s$-norm of the covariant derivative $\nabla u(t)$ as follows.
\begin{eqnarray*}
&&\hat{\lambda}(t)\|\nabla u(t)\|_{s} \le \hat{ \lambda}(t)  \int_{-\infty}^t  [c_d(t-\tau)]^{ \frac{1}{p}-\frac{1}{s} +\frac{1}{d}}e^{-\beta(t-\tau)}(\| f(\tau)\|_2 + \| f(\tau)\|_p )d\tau\cr
&+&\hat{ \lambda}(t) \int_{-\infty}^t  [c_d(t-\tau)]^{\frac{1}{r}-\frac{1}{s} + \frac{1}{d}}e^{-\beta(t-\tau)} \left(\| \nabla_vv(\tau)\|_r+\| \nabla_vv(\tau)\|_2\right) d\tau \cr
&\leq&   G_{41}(t)\| f\|_{\mathbb X} +\hat{ \lambda}(t) \int_{-\infty}^t  [c_d(t-\tau)]^{\frac{1}{s}+\frac{1}{d}}e^{-\beta(t-\tau)} (\|v(\tau)\|_p\|\nabla v(\tau) \|_s + \| v(\tau)\|_p\| \nabla v(\tau)\|_{\tilde{p}}) d\tau \cr
&\le&   G_{41}(t)\| f\|_{\mathbb X}  +\hat{ \lambda}(t) \int_{-\infty}^t  [c_d(t-\tau)]^{\frac{1}{s} + \frac{1}{d}}\left( \left[\hat{ \lambda}(\tau)\right] ^{-1} +\left[ \tilde{ \lambda}(\tau)\right] ^{-1} \right) e^{-\beta(t- \tau)} d\tau \| v \|^2_{\mathbb X}\cr
&\leq&   G_{41}(t)\| f\|_{\mathbb X}+ G_{42}(t) \| v\|^2_{\mathbb X},
\end{eqnarray*}
where  $G_{41}, G_{42}$ are bounded (see Appendix) with
\begin{eqnarray*}
	 &&G_{41}(t) := \hat{ \lambda}(t)\int_{-\infty}^t [c_d(t-\tau)]^{ \frac{1}{p}-\frac{1}{s} +\frac{1}{d}}e^{-\beta(t-\tau)} d\tau\le M_{41}<+\infty,\cr
	 &&G_{42}(t) :=\hat{ \lambda}(t) \int_{-\infty}^t [c_d(t-\tau)]^{\frac{1}{s} + \frac{1}{d}}\left( \left[\hat{ \lambda}(\tau)\right] ^{-1} +\left[ \tilde{ \lambda}(\tau)\right] ^{-1} \right) e^{-\beta(t- \tau)} d\tau \le M_{42}<+\infty. 
 \end{eqnarray*}
 
	Finally, the inequality \eqref{rgl} holds as desired if we take 
$$\widetilde{C} = \max\left\{ C_1,\, M_{31},\, M_{41} \right\} \hbox{  and } \widetilde{M} = \max\left\{ M_1,\,M_2,\, M_{32},\, M_{42}\right\}.$$
\end{proof}

We now prove our three main theorems.

\textbf{ \underline{\textit {Proof of Theorem \ref{th1}.}} } 
\noindent
Firstly, we denote
$$ \| u(t) \|^{\blacklozenge} = \| u(t)\|_{L^2} + \| u(t)\|_{L^p} + [c_d(t)]^{- \left(\frac{1}{p}-\frac{1}{\tilde{p}}+\frac{1}{d} \right)}\| \nabla u(t)\|_{L^{\tilde p}} + [c_d(t)]^{- \left(\frac{1}{p}-\frac{1}{s}+\frac{1}{d} \right)} \| \nabla u(t)\|_{L^s},$$
and 
$$ \| u (t)\|^{\lozenge} = \| u(t)\|_{L^p} +\|u(t)\|_{L^2}    + \tilde{\lambda}(t) \| u(t)\|_{L^{\tilde{p}}}+\hat{ \lambda}(t) \| u(t)\|_{L^{s}}.$$
(i) Setting $Y=L^p(\Gamma(T\mathcal{M}))\cap L^2(\Gamma(T\mathcal{M}))$. Since the existence of bounded mild solution of equation \eqref{CCauchyStokes} (without the initial data) on the full line time-axis is established in Lemma \ref{Thm:linear}, we can define a solution operator associated with equation \eqref{CCauchyStokes} as follows 
\begin{align*}
S: \mathbb{X}\times C_b(\r,Y) &\to \mathbb{X}\cr
(v,f) &\mapsto u,
\end{align*}
where $u(t)$ is the bounded mild solution of equation \eqref{CCauchyStokes} on the full line time-axis. This means that 
\begin{equation}
S(v,f)(t) = u(t)
\end{equation}
for all $t\in \mathbb{R}$.

Indeed, since $ (v,f)\in \mathbb{X}\times C_b(\r,Y)$ is  the asymptotic periodic function, for each $ \epsilon  > 0$, there exists $l_{\epsilon}>0 $ such that every interval of length $l_{\epsilon}$ contains at least a number $T $ with the following property
$$\sup_{t \in \r } \{ \| v(t+T)  - v(t) \|^{\lozenge} + \| f(t+T) - f(t) \|_Y \} < \epsilon.$$

By the same way as in the proof of Assertion ii) of Lemma \ref{Thm:linear}, we can estimate
\begin{eqnarray}\label{AP1}
&&\left\|{S}(v,f)(t+T) - {S}(v,f)(t)\right\|^{\lozenge} \cr
&=&\left\|\int_{-\infty}^{t+T} e^{-(t+T-\tau)\mathcal{A}}\mathbb{P}[-\nabla_v v +f](\tau) d\tau - \int_{-\infty}^{t} e^{-(t-\tau)\mathcal{A}}\mathbb{P}[-\nabla_v v + f](\tau) d\tau \right\|^{\lozenge} \cr
&=&\left\|\int_{0}^\infty e^{-\tau\mathcal{A}}\mathbb{P}[-\nabla_v v(t+T-\tau) + f(t+T-\tau) + \nabla_vv(t-\tau) - f(t-\tau)] d\tau \right\|^{\lozenge} \cr
&=&\left\|\int_{0}^\infty e^{-\tau\mathcal{A}}\mathbb{P}\left[-\nabla_{v(t+T-\tau)-v(t-\tau)} v(t+T-\tau) + \nabla_{v(t-\tau)}(v(t-\tau)-v(t+T-\tau))\right]d\tau\right\|^{\lozenge}  
\cr
&+ &\left\|\int_{0}^\infty e^{-\tau\mathcal{A}}\mathbb{P}\left[ f(t+T-\tau) - f(t-\tau)\right] d\tau \right\|^{\lozenge} \cr
&\le& \widetilde{C} \left(\| v(\cdot+T)- v(\cdot)\|_{\mathbb{X}} + \| f(\cdot+T) - f(\cdot) \|_{C_b(\r,Y)}\right)  < 2\widetilde{C}\epsilon,
\end{eqnarray}
for all $t \in \r$, where $\widetilde{C}$ is determined as in Assertion ii) of Lemma \ref{Thm:linear}. 
Therefore, solution operator $S$ maps $(v,f)$ to an almost periodic function in $\mathbb{X}$. This shows the existence of almost periodic mild solution of equation \eqref{CCauchyStokes}.

(ii) Since the existence of bounded mild solution of equation \eqref{CCauchyStokes} (with the initial data $u_0$) on the half line time-axis is established in Assertion (ii) of Lemma \ref{Thm:linear}, we can define a solution operator associated with equation \eqref{CCauchyStokes} as follows 
\begin{align*}
\mathcal{S}: \mathcal{X}\times C_b(\r_+,Y) &\to \mathcal{X}\cr
(v,f) &\mapsto u,
\end{align*}
where $u(t)$ is the bounded mild solution of equation \eqref{CCauchyStokes} on the half line time-axis. This means that 
\begin{equation}
\mathcal{S}(v,f)(t) = u(t)
\end{equation}
for all $t\in \mathbb{R}_+$.

In fact, for each $ (v,f)\in \cal{X}\times C_b(\r_+,Y)$ is  the asymptotic almost periodic function, there exists the asymptotic periodic function $(\eta,h)\in \cal{X}\times C_b(\r_+,Y)$ and $(\omega,\phi) \in \cal X\times C
_0(\r_+,  Y)$ with $\lim_{t\to +\infty}\|\omega(t)\|^{\blacklozenge}_{\cal X}=0$ such that $v(t)=\eta(t)+ \omega(t)$ and $f(t)=h(t)+\phi(t)$ for all $t\in \r_+$. Therefore, for all $t\in\r_+$, one has
\begin{eqnarray*}
S(v,f)(t)&=& e^{-t\cal A}u(0)+\int_0^t e^{-(t-\tau)\cal A}\mathbb P[-\nabla_v v +f](\tau) d\tau \cr
&=& e^{-t\cal A}u(0)+\int_0^t e^{-(t-\tau)\cal A}\mathbb P \, h(\tau) d\tau +\int_0^t e^{-(t-\tau)\cal A}\mathbb P \, \phi(\tau) d\tau\cr 
&-& \int_0^t e^{-(t-\tau)\cal A}\mathbb P\,[\nabla_{\eta(\tau)} \eta(\tau)] d\tau-\int_0^t e^{-(t-\tau)\cal A}\mathbb P\,[\nabla_{\omega(\tau)} v(\tau)] d\tau \cr 
&-&\int_0^t e^{-(t-\tau)\cal A}\mathbb P\,[\nabla_{\eta(\tau)}\omega(\tau)] d\tau
\cr
&=& \int_{-\infty}^t e^{-(t-\tau)\cal A}\mathbb P\,h(\tau) d\tau -\int_{-\infty}^t e^{-(t-\tau)\cal A}\mathbb P\,[\nabla_{\eta(\tau)}\eta(\tau)] d\tau\cr
&+&e^{-t\cal A}u(0)  -\int_{-\infty}^0 e^{-(t-\tau)\cal A}\mathbb P\, h(\tau) d\tau +\int_{-\infty}^0 e^{-(t-\tau)\cal A}\mathbb P\,[\nabla_{\eta(\tau)}\eta(\tau)] d\tau \cr 
&+&\int_0^t e^{-(t-\tau)\cal A}\mathbb P\, \phi(\tau) d\tau  -\int_0^t e^{-(t-\tau)\cal A}\mathbb P\,[\nabla_{\omega(\tau)}v(\tau)] d\tau \cr 
&-&\int_0^t e^{-(t-\tau)\cal A}\mathbb P\,[\nabla_{\eta(\tau)}\omega(\tau)] d\tau.
\end{eqnarray*}
We set
\begin{eqnarray*}
&&S_a(h)(t)=   \int_{-\infty}^t e^{-(t-\tau)\cal A}\mathbb P\, h(\tau) d\tau; \;
S_a(\nabla_{\eta}\eta)(t)=\int_{-\infty}^t e^{-(t-\tau)\cal A}\mathbb P\,[\nabla_{\eta(\tau)}\eta(\tau)] d\tau;
\cr
&&S_0(\phi)(t)= \int_0^t e^{-(t-\tau)\cal A}\mathbb P\,\phi(\tau) d\tau; \,\;\;S_0(\nabla_{\omega}v)(t)= \int_0^t e^{-(t-\tau)\cal A}\mathbb P\,[\nabla_{\omega(\tau)} v(\tau)] d\tau; \cr 
&&S_0(\nabla_{\eta}\omega)(t)=\int_0^t e^{-(t-\tau)\cal A}\mathbb P\,[\nabla_{\eta(\tau)}\omega(\tau)] d\tau.
\end{eqnarray*}
By the same way as in the proof of Lemma \ref{Thm:linear}, it is easy to justify that $S_a(h)$ and $S_a(\nabla_{\eta}\eta)$ are bounded functions in $\mathbb X$.
We now show that $S(v,f)$ is an asymptotic almost function by three following claims:

{\bf Claim 1.} We prove that $S_a(h) - S_a(\nabla_{\eta}\eta)$ is an almost periodic function. In fact, by the hypothesis  $(\eta,h)$ is the asymptotic function, for each $\varepsilon>0$ such that there is $L_{\varepsilon}>0$ every interval with length $L_{\varepsilon}$ contains at least a number $T$ so that
$$\sup_{t\in \r}(\|\eta(t+T)-\eta(t)\|^{\blacklozenge}_{ \cal X}+ \|h(t+T)-h(t)\|_{Y}  )<\varepsilon   $$

By the same proof of Assertion ii) of Lemma \ref{Thm:linear}, one has

\begin{align*}
&\left\|S_a(h)(t+T)-S_a(h)(t)\right\|_Y\cr
&=\left\| \int_{-\infty}^{t+T} e^{-(t+T-\tau)\cal A}\mathbb P\, h(\tau) d\tau -\int_{-\infty}^{t} e^{-(t-\tau)\cal A}\mathbb P\, h(\tau) d\tau   \right\|_Y\cr
&=\left\| \int_0^{+\infty} e^{-z\cal A}\mathbb P[  h(t+T-z)-  h(t-z)] dz \right\|_Y\cr
&\le \tilde{C}\sup_{z\in \r}\|h(z+T)-h(z)\|_Y<\tilde{C}\varepsilon,\, \forall t\in \r,
\end{align*}
here $\tilde{C}$ is determined in Lemma \ref{Thm:linear}. Furthermore, 
\begin{eqnarray*}
&&\left\|S_a(\nabla_{\eta}\eta)(t+T)-S_a(\nabla_{\eta}\eta)(t)\right\|^{\blacklozenge}_{\cal X}\cr
&=&\left\| \int_{-\infty}^{t+T} e^{-(t+T-\tau)\cal A}\mathbb P[\nabla_{\eta(\tau)}\eta(\tau)] d\tau -\int_{-\infty}^{t} e^{-(t-\tau)\cal A}\mathbb P[\nabla_{\eta(\tau)}\eta(\tau)] d\tau   \right\|^{\blacklozenge}_{\cal X}\cr
&\le&\left\| \int_0^{+\infty} e^{-z\cal A}\mathbb P [\nabla_{\eta(t+T-z)}\eta(t+T-z)]d\tau-\int_0^{+\infty} e^{-z\cal A}\mathbb P [\nabla_{\eta(t-z)}\eta(t-z)]d\tau \right\|^{\blacklozenge}_{\cal X}\cr
&\le& 2\tilde{M}\|\eta(\cdot+T)-\eta(\cdot)\|^{\blacklozenge}_{\cal X}\|\eta\|_{\cal X}<2\tilde{M}\|\eta\|_{\cal X}\varepsilon,\, \forall t\in \r,
\end{eqnarray*}
here $\tilde{M}$ is determined in Lemma \ref{Thm:linear}.
These pointed out that $S_a(h)-S_a(\nabla_{\eta}\eta)$ is the asymptotic function as claimed. 

{\bf Claim 2.}  $S_0(\phi)-S_0(\nabla_{\omega} v)-S_0(\nabla_{\eta}\omega)$ is the forward asymptotic and continuous function. Indeed, for the first term
\begin{eqnarray*}
S_0(\phi)(t)&=& \int_0^t e^{-(t-\tau)\cal A}\mathbb P\, \phi(\tau) d\tau
= \int_0^{\frac{t}{2}} e^{-(t-\tau)\cal A}\mathbb P\, \phi(\tau) d\tau + \int_{\frac{t}{2}}^t e^{-(t-\tau)\cal A}\mathbb P\,\phi(\tau) d\tau\cr 
&=& S_1(\phi)(t)+S_2(\phi)(t).
\end{eqnarray*}
For $t>2$, it is clear that
\begin{eqnarray*}
\|S_1(\phi)(t)\|_{L^p\cap L^2} &\lesssim& \int_0^{\frac{t}{2}} e^{-\beta(t-\tau)}d\tau \|\phi\|_{\infty, L^p\cap L^2} 
=\|\phi\|_{\infty, L^p\cap L^2}\dfrac{1}{\beta}\left[e^{-\frac{\beta t}{2}}-e^{-\beta t} \right].    
\end{eqnarray*}
Furthermore,
\begin{eqnarray*}
\tilde{\lambda}(t)\|\nabla S_1(\phi)(t)\|_{\tilde{p}} &\lesssim&\tilde{\lambda}(t) \int_0^{\frac{t}{2}} e^{-\beta(t-\tau)}d\tau\|\phi\|_{\infty, L^p\cap L^2}
 =\dfrac{1}{\beta}\left[e^{-\frac{\beta t}{2}}-e^{-\beta t} \right]  \|\phi\|_{\infty, L^p\cap L^2}.
\end{eqnarray*}
\begin{eqnarray*}
\hat{\lambda}(t)\|\nabla S_1(\phi)(t)\|_{s} &\lesssim&\hat{\lambda}(t) \int_0^{\frac{t}{2}} e^{-\beta(t-\tau)}d\tau\|\phi\|_{\infty, L^p\cap L^2}=
\dfrac{1}{\beta}\left[e^{-\frac{\beta t}{2}}-e^{-\beta t} \right]  \|\phi\|_{\infty, L^p\cap L^2}.
\end{eqnarray*}
These lead to $$\lim_{t\to +\infty} \|S_1(\phi)(t)\|^{\blacklozenge}_{\cal X}=0.$$

It is clear that for any $\varepsilon>0,$ one has $\|\phi(t)\|_Y<\varepsilon \;\forall t>t_0,$ for some $t_0>0$. Therefore, we imply
$$\|S_2(\phi)(t)\|^{\blacklozenge}_{\cal X} \le N \varepsilon,\, \;\forall t>t_0, \text{ for some }t_0>0.$$
 By two above reasons, we get 
 $$\lim_{t\to +\infty} \|S_0(\phi)(t)\|^{\blacklozenge}_{\cal X}=0.$$

There are similar for proving  $S_0(\nabla_{\omega} v)$ and $S_0(\nabla_{\eta}\omega)$ are the forward asymptotic and continuous functions, we merely justify for the case of $S_0(\nabla_{\omega} v)$. Indeed, one has 
\begin{eqnarray*}
S_0(\nabla_{\omega}v)(t)&=& \int_0^t e^{-(t-\tau)\cal A}\mathbb P\,[\nabla_{\omega(\tau)} v(\tau)] d\tau\cr
&=&\int_0^{t/2} e^{-(t-\tau)\cal A}\mathbb P\,[\nabla_{\omega(\tau)} v(\tau)] d\tau+\int_{t/2}^t e^{-(t-\tau)\cal A}\mathbb P\,[\nabla_{\omega(\tau)} v(\tau)] d\tau\cr
&=&S_3(\nabla_{\omega}v)(t)+S_4(\nabla_{\omega}v)(t).
\end{eqnarray*} 
For $t>2$, by  estimating analogously as proof of Lemma \ref{Thm:linear}. i), we get 
\begin{eqnarray*}
\|S_3(\nabla_{\omega}v)(t)\|_2&=& \int_0^{t/2}\| e^{-(t-\tau)\cal A}\mathbb P\,[\nabla_{\omega(\tau)} v(\tau)] \|_2d\tau\cr
&=& \int_0^{t/2} e^{-\beta(t-\tau)}\|\omega(\tau)\|_p \|\nabla v(\tau)\|_{\tilde{p}}d\tau
= \int_0^{t/2} e^{-\beta(t-\tau)}\left(\tilde{\lambda}(\tau)\right)^{-1}  d\tau\|v\|^2_{\mathbb X}   \cr
&\le& \int_0^1 e^{\beta(1-t)} \tau^{  -\frac{d}{2p}+\frac{d}{2\tilde p}-\frac{1}{2}}  d\tau\|v\|^2_{\mathbb X} + \int_1^{t/2} e^{-\beta(t-\tau)}  d\tau\|v\|^2_{\mathbb X}  \cr
&\le&\left[ \dfrac{e^{\beta(1-t)}}{\theta_{\tilde{p}}}+ \dfrac{e^{- \frac{\beta t}{2}}}{\beta} -\dfrac{ e^{\beta(1- t)} }{\beta}  \right] \|v\|^2_{\mathbb X} 
\end{eqnarray*} 
converges to $0$ as $t \to \infty.$ Moreover, we see that
\begin{eqnarray*}
\|S_3(\nabla_{\omega}v)(t)\|_p&=& \int_0^{t/2}\| e^{-(t-\tau)\cal A}\mathbb P\,[\nabla_{\omega(\tau)} v(\tau)] \|_pd\tau\cr
&=& \int_0^{t/2} [c_d(t-\tau)]^{\frac{1}{r}-\frac{1}{p}}e^{-\beta(t-\tau)}(\|\nabla_{\omega(\tau)} v(\tau)\|_r +\|\nabla_{\omega(\tau)} v(\tau)\|_2)d\tau\cr
&=& \int_0^{t/2} e^{-\beta(t-\tau)}(\|\omega(\tau)\|_p \|\nabla v(\tau)\|_s+\|\omega(\tau)\|_p \|\nabla v(\tau)\|_{\tilde{p}})d\tau\cr
&=& \int_0^{t/2} e^{-\beta(t-\tau)}\left[\left(\hat{\lambda}(\tau)\right)^{-1}+ \left(\tilde{\lambda}(\tau)\right)^{-1} \right]  d\tau\|v\|^2_{\mathbb X}   \cr
&\le& \int_0^1 e^{\beta(1-t)}\left[\tau^{  -\frac{d}{2p}+\frac{d}{2s}-\frac{1}{2}} +\tau^{  -\frac{d}{2p}+\frac{d}{2\tilde p}-\frac{1}{2}} \right] d\tau\|v\|^2_{\mathbb X} + \int_1^{t/2} 2e^{-\beta(t-\tau)}  d\tau\|v\|^2_{\mathbb X}  \cr
&\le&\left[ \dfrac{e^{\beta(1-t)}}{\theta_{s}}+\dfrac{e^{\beta(1-t)}}{\theta_{\tilde{p}}}+ \dfrac{2e^{- \frac{\beta t}{2}}}{\beta} -\dfrac{2e^{\beta(1- t)} }{\beta}  \right] \|v\|^2_{\mathbb X} 
\end{eqnarray*} 
converges to $0$ as $t \to \infty.$
Analogously, one also has
\begin{eqnarray*}
&\tilde{\lambda}(t)&\|\nabla S_3(\nabla_{\omega}v)(t)\|_{\tilde{p}}\cr 
&\le&\tilde{\lambda}(t) \int_0^{t/2} [c_d(t-\tau)]^{\frac{1}{r}-\frac{1}{\tilde p}+\frac{1}{d}}e^{-\beta(t-\tau)}(\|\nabla_{\omega(\tau)} v(\tau)\|_r +\|\nabla_{\omega(\tau)} v(\tau)\|_2)d\tau\cr
&\le& \int_0^{t/2} e^{-\beta(t-\tau)}(\|\omega(\tau)\|_p \|\nabla v(\tau)\|_s+\|\omega(\tau)\|_p \|\nabla v(\tau)\|_{\tilde{p}})d\tau  
\end{eqnarray*}
converges to $0$ as $t \to \infty.$ And,
\begin{eqnarray*}
&\hat{\lambda}(t)&\|\nabla S_3(\nabla_{\omega}v)(t)\|_s\cr 
&\le&\hat{\lambda}(t) \int_0^{t/2} [c_d(t-\tau)]^{\frac{1}{r}-\frac{1}{s}+\frac{1}{d}}e^{-\beta(t-\tau)}(\|\nabla_{\omega(\tau)} v(\tau)\|_r +\|\nabla_{\omega(\tau)} v(\tau)\|_2)d\tau\cr
&\le& \int_0^{t/2} e^{-\beta(t-\tau)}(\|\omega(\tau)\|_p \|\nabla v(\tau)\|_s+\|\omega(\tau)\|_p \|\nabla v(\tau)\|_{\tilde{p}})d\tau  
\end{eqnarray*}
converges to $0$ as $t \to \infty.$

These estimates lead to
$$\lim_{t\to +\infty}\| S_3(\nabla_{\omega}v)(t)\|_{\cal X}^{\blacklozenge}=0.$$

Moreover, since $\lim_{t\to +\infty}\|\omega(t)\|_{\cal X}{\blacklozenge} = 0,$ $\forall \varepsilon >0,$ we have $\|\omega(t)\|_{\cal X}^{\blacklozenge}<\varepsilon, \; \forall t>t_0$ for some $t_0>0$. Hence, it is not hard to show that
$$\| S_4(\nabla_{\omega}v)(t)\|_{\cal X}^{\blacklozenge} \leqslant N \varepsilon \text{ for } t>t_0.$$
That means 
$$\lim_{t\to +\infty}\| S_4(\nabla_{\omega}v)(t)\|_{\cal X}^{\blacklozenge} = 0 \text{ and then } \lim_{t\to +\infty}\| S_0(\nabla_{\omega}v)(t)\|_{\cal X}^{\blacklozenge} = 0.$$

{\bf Claim 3.}  $e^{-t\cal A}[ u(0)-S_a(h)(0)+S_a(\nabla_{\eta}\eta)(0)] $ is the forward asymptotic and continuous function. Indeed, Thanks to Lemma \ref{Thm:linear}. i) again, one gets
\begin{eqnarray*}
&&\|e^{-t\cal A}[ u(0)-S_a(h)(0)+S_a(\nabla_{\eta}\eta)(0)]\|_2\cr
&& \lesssim e^{-\beta_2 t}\Big\{\|u(0)\|_2+\|S_a(h)(0)\|_2+\|S_a(\nabla_{\eta}\eta)(0)\|_2\Big\}\cr
&& \lesssim e^{-\beta_2 t}\Big\{\|u(0)\|_2+\|h\|_{C_b(\r_+,Y)}+\|\eta\|_{\cal X}^2\Big\}\to 0 \text{ as } t\to +\infty.
\end{eqnarray*}
It also is clear that
\begin{eqnarray*}
&&\|e^{-t\cal A}[ u(0)-S_a(h)(0)+S_a(\nabla_{\eta}\eta)(0)]\|_p\cr
&& \lesssim e^{-\hat\beta_2 t}\Big\{\|u(0)\|_p+\|S_a(h)(0)\|_p+\|S_a(\nabla_{\eta}\eta)(0)\|_p\Big\}\cr
&& \lesssim e^{-\hat\beta_2 t}\Big\{\|u(0)\|_p+\|h\|_{C_b(\r_+,Y)}+\|\eta\|_{\cal X}^2\Big\}\to 0 \text{ as } t\to +\infty.
\end{eqnarray*}
here $\beta_2=d-1+\gamma_{2,2},\; \hat\beta_2=d-1+\gamma_{p,p}$. Moreover, 
\begin{eqnarray*}
&&\tilde{\lambda}(t)\|\nabla e^{-t\cal A}[ u(0)-S_a(h)(0)+S_a(\nabla_{\eta}\eta)(0)]\|_{\tilde{p}}\cr
&& \lesssim\tilde{\lambda}(t) [c_d(t)]^{\frac{1}{r}-\frac{1}{\tilde p}+\frac{1}{d}}e^{-\beta t} \Big\{ \|u(0)\|_r+\|S_a(h)(0)\|_r+\|S_a(\nabla_{\eta}\eta)(0)\|_r \cr
&&\hspace*{4.1cm}+ \|u(0)\|_2+\|S_a(h)(0)\|_2+\|S_a(\nabla_{\eta}\eta)(0)\|_2\Big\} \cr
&& \lesssim e^{-\beta t}\Big\{\|u(0)\|_r+ \|u(0)\|_2+2\|h\|_{C_b(\r_+,Y)}+2\|\eta\|_{\cal X}^2\Big\}\to 0 \text{ as } t\to +\infty.
\end{eqnarray*}
On the other hand,
\begin{eqnarray*}
&&\hat{\lambda}(t)\|\nabla e^{-t\cal A}[ u(0)-S_a(h)(0)+S_a(\nabla_{\eta}\eta)(0)]\|_s\cr
&& \lesssim\hat{\lambda}(t) [c_d(t)]^{\frac{1}{r}-\frac{1}{s}+\frac{1}{d}}e^{-\beta t}  \Big\{\|u(0)\|_r+\|S_a(h)(0)\|_r+\|S_a(\nabla_{\eta}\eta)(0)\|_r  \cr
&& \hspace*{4.1cm}+ \|u(0)\|_2+\|S_a(h)(0)\|_2+\|S_a(\nabla_{\eta}\eta)(0)\|_2\Big\}\cr
&& \lesssim e^{-\beta t}\Big\{\|u(0)\|_r+ \|u(0)\|_2+2\|h\|_{C_b(\r_+,Y)}+2\|\eta\|_{\cal X}^2\Big\}\to 0 \text{ as } t\to +\infty.
\end{eqnarray*}
Therefore, our proof is completed as desired.

We now use Theorem \ref{th1} to prove Theorem \ref{th2} as follows. 

\textbf{\underline{\textit {Proof of Theorem \ref{th2}.}} } 
i) We will study the existence of almost periodic mild solution of Equation \eqref{DDivNavierStokes} on the ball with center zero and radius $\rho.$
\begin{eqnarray*}
\mathcal B_{\rho}^{AP}=\Big\{v\in \mathbb X: v \text{ is the almost periodic function and }\|v\|_{\mathbb X}\le \rho\Big\}.
\end{eqnarray*}
By the Lemma \ref{Thm:linear}. ii), for each $v\in \mathcal B_{\rho}^{AP}$,  the linear equation 
\begin{equation}\label{solR}
 	u(t) =   \int_{-\infty}^te^{-(t-\tau)\mathcal{A}} \mathbb{P} [-\nabla_vv+f] (\tau) d\tau
 \end{equation}
has unique AAP- mild solution $u$ such that 
\begin{eqnarray}\label{lerho}
\|u\|_{\mathbb X}\le \tilde{C} \|f\|_{\infty, L^p\cap L^2}  +\tilde{M}\|v\|^2_{\mathbb X} \le \rho,
\end{eqnarray}
if $\|f\|_{\infty, L^p\cap L^2}$ and $\rho$ are small enough. And then, in this circumstance, one can define a map 
\begin{eqnarray*}
\Phi:\mathbb X &\to& \mathbb X.\cr
v&\mapsto& \Phi(v)=u
\end{eqnarray*}
That means the map $\Phi$ acts from $\mathcal B_{\rho}^{AP}$ into itself. Therefore, we are able to rewrite (\ref{solR}) as follows,
\begin{equation}\label{solPhi}
 	\Phi(v)(t) =   \int_{-\infty}^te^{-(t-\tau)\mathcal{A}} \mathbb{P} [-\nabla_vv+f] (\tau) d\tau.
 \end{equation}
In the case that $v_1,v_2 \in \mathcal B_{\rho}^{AP}$, it is clear that the function $u:= \Phi_1(v_1)-\Phi_2(v_2)$ is the unique almost periodic function to the equation
$$ \partial_tu-\cal Au=\mathbb P[-\nabla_{v_1}v_1+\nabla_{v_2}v_2] =\mathbb P[-(v_1-v_2)\cdot\nabla v_1- v_2\cdot \nabla(v_1-v_2)] $$
By using the same estimates of the proof Lemma 
\ref{Thm:linear} and the inequality (\ref{lerho}), we are able to imply that
\begin{eqnarray*} 
\|\Phi(v_1)-\Phi(v_2)\|_{\cal X}\le 2 \tilde{M} \rho \|v_1-v_2\|_{\cal X},
\end{eqnarray*}
if $\rho$ is sufficiently small. In this situation, this means that $\Phi$ is a contraction on $\mathcal B_{\rho}^{AP}$  and it has  the unique fixed point $\hat u$. Moreover, the function $\hat u$ is also an unique almost periodic -mild solution to Navier-Stokes Equation \eqref{DDivNavierStokes} in $\mathcal B_{\rho}^{AP}$.

\medskip
%We should also note that in the case $\mathcal B_{\rho}^{AP}$ replaced by $\cal B:= \{v\in \mathbb X: \|v\|_{\mathbb X}\le \rho\}$, one is able to see that for a external force $f\in C_b(\r,Y)$, if $\|f\|_{\infty, L^p\cap L^2}$ and $\rho$ are sufficiently small then the Navier-Stokes Equation \eqref{DDivNavierStokes} admits the unique bounded solution $\hat{u} \in \cal B_{\rho}$.

ii) We continue to investigate the existence of asymptotic almost periodic mild solution of Equation \eqref{DDivNavierStokes} on the ball with center zero and radius $\rho.$
\begin{eqnarray*}
\mathcal B_{\rho}^{AAP}=\Big\{v\in \cal X: v \text{ is the asymptotic almost periodic function and }\|v\|_{\cal X}\le \rho\Big\}.
\end{eqnarray*}
By the Lemma \ref{Thm:linear}. i), for each $v\in \mathcal B_{\rho}^{AAP}$,  the linear equation 
\begin{equation}\label{asolR}
 	u(t) = u(0)+  \int_0^te^{-(t-\tau)\mathcal{A}} \mathbb{P} [-\nabla_vv+f] (\tau) d\tau \hbox{ for }t\ge 0.
 \end{equation}
has unique AAP- mild solution $u$ such that 
\begin{eqnarray}\label{alerho}
\|u\|_{\cal X}\le \tilde{C}\left(\|u(0)\|_{L^p\cap L^2}+\|f\|_{\infty, L^p\cap L^2}\right) +\tilde{M}\|v\|^2_{\cal X} \le \rho,
\end{eqnarray}
if $\|u(0)\|_{L^p\cap L^2}, \|f\|_{\infty, L^p\cap L^2}$ and $\rho$ are small enough. In this situation, one can define a map 
\begin{eqnarray*}
\Phi:\cal X &\to& \cal X.\cr
v&\mapsto& \Phi(v)=u
\end{eqnarray*}
That means the map $\Phi$ acts from $\mathcal B_{\rho}^{AAP}$ into itself. Thus, we are able to rewrite (\ref{asolR}) as follows,
\begin{equation}\label{asolPhi}
 	\Phi(v)(t) = u(0)+  \int_0^te^{-(t-\tau)\mathcal{A}} \mathbb{P} [-\nabla_vv+f] (\tau) d\tau.
 \end{equation}
In the case that $v_1,v_2 \in \mathcal B_{\rho}^{AAP}$, it is clear that the function $u:= \Phi_1(v_1)-\Phi_2(v_2)$ is the unique asymptotic almost periodic function to the equation
$$ \partial_tu-\cal Au=\mathbb P[-\nabla_{v_1}v_1+\nabla_{v_2}v_2] =\mathbb P[-(v_1-v_2)\cdot\nabla v_1- v_2\cdot \nabla(v_1-v_2)] $$
By using the same estimates of the proof Lemma 
\ref{Thm:linear} and the inequality (\ref{alerho}), we are able to see that
\begin{eqnarray*} 
\|\Phi(v_1)-\Phi(v_2)\|_{\cal X}\le 2 \tilde{M} \rho \|v_1-v_2\|_{\cal X},
\end{eqnarray*}
if $\rho$ is sufficiently small. In this situation, this means that $\Phi$ is a contraction on $\mathcal B_{\rho}^{AAP}$  and it has  the unique fixed point $\hat u$. Moreover, the function $\hat u$ is also an unique asymptotic almost periodic -mild solution to Navier-Stokes Equation \eqref{DDivNavierStokes} in $\mathcal B_{\rho}^{AAP}$.

\medskip
%We should also note that in the case $\mathcal B_{\rho}^{AAP}$ replaced by $\cal B:= \{v\in \cal X: \|v\|_{\cal X}\le \rho\}$, one is able to see that for a external force $f\in C_b(\r_+,Y)$, if $\|f\|_{\infty, L^p\cap L^2}$ and $\rho$ are sufficiently small then the Navier-Stokes Equation \eqref{DDivNavierStokes} admits the unique bounded solution $\hat{u} \in \cal B_{\rho}$.

Now we use Gronwall's inequality to prove Theorem \ref{th3}

 \textbf{\underline{\textit {Proof of Theorem \ref{th3}.}}} 
 By Lemma \ref{estimates}, one has the following estimates
 \begin{eqnarray*}
 \|\hat u(t)\|_p &\le& e^{-\beta t}\|\hat u(0)\|_p+\int_0^t[c_d(t-\tau)]^{\frac{1}{r}-\frac{1}{p}}e^{-\beta (t-\tau)}(\|\nabla _{\hat u}\hat u(\tau)\|_r+\|\nabla _{\hat u}\hat u(\tau)\|_2)d\tau\cr
 &\le& e^{-\beta t}\|\hat u(0)\|_p+\int_0^t[c_d(t-\tau)]^{\frac{1}{s}}e^{-\beta (t-\tau)}[\|\hat u(\tau)\|_p\|\nabla \hat u(\tau)\|_s+\|\hat u(\tau)\|_p\|\nabla \hat u(\tau)\|_{\tilde p}]d\tau\cr
  &\le& e^{-\beta t}\|\hat u(0)\|_p+\|\hat u\|_{\cal X}\int_0^t[c_d(t-\tau)]^{\frac{1}{s}}e^{-\beta (t-\tau)} \left[\left(\hat \lambda(\tau)\right)^{-1}+\left( \tilde \lambda(\tau)\right)^{-1}\right] \|\hat u(\tau)\|_p   d\tau\cr
 \end{eqnarray*}
By putting $x(\tau)=e^{\gamma \tau}\|\hat u(\tau)\|_p$, for $\gamma <\beta$ we imply that
 \begin{eqnarray*}
 	x(t) &\le& \|\hat u(0)\|_p+\|\hat u\|_{\cal X}\int_0^t[c_d(t-\tau)]^{\frac{1}{s}}e^{-(\beta -\gamma)(t-\tau)} \left[\left(\hat \lambda(\tau)\right)^{-1}+\left( \tilde \lambda(\tau)\right)^{-1}\right] x(\tau)   d\tau,
 \end{eqnarray*}
 here it is easy to see that 
 $$\|\hat u\|_{\cal X}\int_0^t[c_d(t-\tau)]^{\frac{1}{s}}e^{-(\beta-\gamma) (t-\tau)} \left[\left(\hat \lambda(\tau)\right)^{-1}+\left( \tilde \lambda(\tau)\right)^{-1}\right]  d\tau\le M_5<+\infty.$$
 Therefore, we now use the Gronwall's inequality to get
 $$|x(t)|\le  \|\hat u(0)\|_p e^{M_5}  \text{ for all } t>0.$$
 This means that
  $$\|\hat u(t)\|_p\lesssim   e^{-\gamma t}  \text{ for all } t>0.$$
 Analogously, we are also able to verify that
   $$\|\hat u(t)\|_2\lesssim   e^{-\gamma t}  \text{ for all } t>0.$$
 
 In addition, by using again Lemma \ref{estimates}, we continue to give estimates for the covariant derivative $\nabla\hat u(t)$ as follows: for $t\ge 1$, we have
  \begin{eqnarray*}
 	&&e^{\gamma t}\left[ \tilde \lambda(t)\|\nabla \hat u(t)\|_{\tilde p}+\hat\lambda(t)\|\nabla \hat u(t)\|_s\right]=e^{\gamma t}\left[ \|\nabla \hat u(t)\|_{\tilde p}+\|\nabla \hat u(t)\|_s\right]  \cr
 	&\lesssim& 2e^{\gamma t}e^{-\beta t}\|\hat u(0)\|_p
 	+e^{\gamma t}\int_0^t\left[ [c_d(t-\tau)]^{\frac{1}{r}-\frac{1}{\tilde p}+\frac{1}{d}}+[c_d(t-\tau)]^{\frac{1}{r}-\frac{1}{s}+\frac{1}{d}}\right] e^{-\beta (t-\tau)}\cr 
 	&&\hspace{7.5cm}\times[\|\nabla _{\hat u}\hat u(\tau)\|_r+\|\nabla _{\hat u}\hat u(\tau)\|_2]d\tau \cr
 	&\lesssim& 2\|\hat u(0)\|_p
 	+e^{\gamma t}\int_0^t\left[ [c_d(t-\tau)]^{\frac{1}{r}-\frac{1}{\tilde p}+\frac{1}{d}}+[c_d(t-\tau)]^{\frac{1}{r}-\frac{1}{s}+\frac{1}{d}}\right] e^{-\beta (t-\tau)}\cr
 	&&\hspace{5.1cm}\times[\|\hat u(\tau)\|_p\|\nabla \hat u(\tau)\|_s+\|\hat u(\tau)\|_p\|\nabla \hat u(\tau)\|_{\tilde p}]d\tau\cr
&\lesssim& 2\|\hat u(0)\|_p
+\|\hat u\|_{\cal X}\int_0^t\left[ [c_d(t-\tau)]^{\frac{1}{r}-\frac{1}{\tilde p}+\frac{1}{d}}+[c_d(t-\tau)]^{\frac{1}{r}-\frac{1}{s}+\frac{1}{d}}\right] e^{-(\beta-\gamma) (t-\tau)}\cr
&&\hspace{3.5cm}\times\left(\hat\lambda(\tau)\right)^{-1} e^{\gamma \tau}\left[ \hat\lambda(\tau)\|\nabla \hat u(\tau)\|_s+\tilde\lambda(\tau)\|\nabla \hat u(\tau)\|_{\tilde p}\right] d\tau. 	 
 \end{eqnarray*}
 By the same way as Appendix, we also easily pointed out that $$ \|\hat u\|_{\cal X}\int_0^t\left[ [c_d(t-\tau)]^{\frac{1}{r}-\frac{1}{\tilde p}+\frac{1}{d}}+[c_d(t-\tau)]^{\frac{1}{r}-\frac{1}{s}+\frac{1}{d}}\right] e^{-(\beta-\gamma) (t-\tau)} \left( \hat \lambda(\tau)\right)^{-1}   d\tau\le M_6<+\infty.$$ 
Hence, we again use the Gronwall's inequality to get
 $$e^{\gamma t}\left[ \tilde \lambda(t)\|\nabla \hat u(t)\|_{\tilde p}+\hat\lambda(t)\|\nabla \hat u(t)\|_s\right]\le  2\|\hat u(0)\|_p e^{M_6}  \text{ for all } t\ge 1.$$
 This implies that 
  $$\tilde \lambda(t)\|\nabla \hat u(t)\|_{\tilde p}+\hat\lambda(t)\|\nabla \hat u(t)\|_s\lesssim e^{- \gamma t}    \text{ for all } t\ge 1.$$
By the above reasons, we are able to conclude that
$$\| \hat u(t)\|^{\blacklozenge} \lesssim e^{- \gamma t}    \text{ for all } t\ge 1.$$
\begin{remark}
In above three main theorems, we have proved the existence and exponential decay for AP and AAP-mild solutions for Stokes and Navier-Stokes equations in the phase spaces $\mathbb{X}$ and $\mathcal{X}$, respectively. These spaces consist the main component $C_b(\r,L^p(\Gamma(T\mathcal{M})))$ (for $p>d$). In fact, if we consider the Navier-Stokes equations on the real hyperbolic manifold $\mathbb{H}^d$, then the Kodaira-Hodge operator commutates with the vectorial heat semigroup and these phase spaces reduce to $C_b(\r, L^p(\Gamma(T\mathbb{H}^d)))$. Therefore, the results obtained in this paper are extension of the ones obtained in \cite{Xuan2022}.
\end{remark} 

\section{Appendix} 
\def\A{\mathcal{A}}
We now justify the boundedness of the integrals which are used in the previous sections. We first denote
 	$$\theta_{s^\prime}=\frac{1}{2}-\frac{d}{2p}+\frac{d}{2s^\prime};\;\,\theta_{\tilde p}=\frac{1}{2}-\frac{d}{2p}+\frac{d}{2\tilde{p}};\;\, \theta_{s}=\frac{1}{2}-\frac{d}{2p}+\frac{d}{2s};\;\, \theta_{r}=\frac{1}{2}-\frac{d}{2r}+\frac{d}{2\tilde{p}}.$$
  For the boundedness of 
 	\begin{eqnarray*}
 		G_1(t) := \int_{-\infty}^t e^{-\beta (t-\tau)}  \left( \tilde{\lambda}(\tau) \right)^{-1} d\tau.
 	\end{eqnarray*}

 	If $t\le -1$ then
 	
 	$$G_1(t)\lesssim \int_{-\infty}^t   [\max\{ {|\tau|^{-\frac{d}{2}},1}\}]^{\frac{1}{p}-\frac{1}{s^\prime}+\frac{1}{d}}e^{-\beta (t-\tau)}d\tau=\int_{-\infty}^{t}   e^{-\beta (t-\tau)}d\tau=\dfrac{1}{\beta}\le M_1.$$
 	
 	If $-1<t\le 0$ then
 	\begin{align*}
 		G_1(t) &\lesssim  
 		\int_{-\infty}^{-1}   e^{-\beta (t-\tau)}d\tau+\int_{-1}^t  |\tau|^{-\frac{d}{2}\left(\frac{1}{p}-\frac{1}{s^\prime}+\frac{1}{d}\right)}e^{-\beta (t-\tau)}d\tau \cr
 		&\leqslant \dfrac{1}{\beta}e^{-\beta(1+t)}+ \int_{-t}^1  \tau^{-\frac{d}{2}\left(\frac{1}{p}-\frac{1}{s^\prime}+\frac{1}{d}\right)}d\tau \le \dfrac{1}{\beta}+\dfrac{1-|t|^{\theta_{s^\prime}} }{\theta_{s^\prime}}\le M_1.
 	\end{align*}
 	
 	If $0<t\le 1$ then
 	\begin{align*}
 		G_1(t) &\lesssim  
 		\int_{-\infty}^{-1}   e^{-\beta (t-\tau)}d\tau+\int_{-1}^0  |\tau|^{-\frac{d}{2}\left(\frac{1}{p}-\frac{1}{s^\prime}+\frac{1}{d}\right)}e^{-\beta (t-\tau)}d\tau +\int_{0}^t  \tau^{-\frac{d}{2}\left(\frac{1}{p}-\frac{1}{\tilde{p}}+\frac{1}{d}\right)}e^{-\beta (t-\tau)}d\tau\cr
 		&\le  \dfrac{1}{\beta}e^{-\beta(1+t)}+ \int_0^1  \tau^{-\frac{d}{2}\left(\frac{1}{p}-\frac{1}{s^\prime}+\frac{1}{d}\right)}d\tau +\int_{0}^t  \tau^{-\frac{d}{2}\left(\frac{1}{p}-\frac{1}{\tilde{p}}+\frac{1}{d}\right)}d\tau
 		\le \dfrac{1}{\beta}+\dfrac{1}{\theta_{s^\prime}}+\dfrac{1}{\theta_{\tilde{p}}}+\le M_1.
 	\end{align*}
 	
 	If $1<t$ then
 	\begin{align*}
 		G_1(t) &\lesssim 
 		\int_{-\infty}^{-1}   e^{-\beta (t-\tau)}d\tau
 		+\int_{-1}^0  |\tau|^{-\frac{d}{2}\left(\frac{1}{p}-\frac{1}{s^\prime}+\frac{1}{d}\right)}d\tau  +\int_{0}^1  \tau^{-\frac{d}{2}\left(\frac{1}{p}-\frac{1}{\tilde{p}}+\frac{1}{d}\right)}d\tau+\int_1^t e^{-\beta (t-\tau)}d\tau
 		\\
 		&   \le \dfrac{1}{\beta} +\dfrac{1}{\theta_{s^\prime}}+\dfrac{1}{\theta_{\tilde{p}}}+ \dfrac{1}{\beta} := M_1.
 	\end{align*}
 Therefore, $G_1(t)\lesssim M_1<+\infty$ for all $t\in \mathbb R.$
 	
 	For the boundedness of 
 	 $$G_2(t) := \int_{-\infty}^t [c_d(t-\tau)]^{\frac{1}{s}}\left(  \left[ \hat{ \lambda}(\tau)\right]^{-1} +\left[ \tilde{ \lambda}(\tau)\right]^{-1}  \right)e^{-\beta(t-\tau)} d\tau.$$
 	 
 	In the case $t\leq-1$, 
 	\begin{eqnarray*}
 		G_2(t) &\lesssim&  \int_{-\infty}^t\left(  (t- \tau)^{-\frac{d}{2s}}+1\right)2 e^{-\beta(t-\tau)}d\tau \cr
 		&=&\dfrac{2}{\beta^{1-\frac{d}{2s}}} \int_{0}^{\infty} \tau^{-\frac{d}{2s}}e^{-\tau}d\tau +\dfrac{2}{\beta}=\dfrac{2}{\beta^{1-\frac{d}{2s}}} \Gamma\left(1-\frac{d}{2s} \right)  +\dfrac{2}{\beta}:= M_{2a}.
 	\end{eqnarray*}

 	In the case $-1<t\le 0$, 
 	\begin{eqnarray*}
 		G_2(t) &\lesssim& \int_{-\infty}^{-1}\left((t- \tau)^{-\frac{d}{2s}}+1\right)2 e^{-\beta(t-\tau)}d\tau    
 		+\int_{-1}^t   (t- \tau)^{-\frac{d}{2s}}  2|\tau|^{-\frac{d}{2}({\frac{1}{p}-\frac{1}{s^\prime}+\frac{1}{d}})}  d\tau\cr
 		&\le&\dfrac{2}{\beta^{1-\frac{d}{2s}}} \int_{0}^{\infty} z^{-\frac{d}{2s}}e^{-z}dz +\dfrac{2}{\beta}e^{-\beta(1+t)}   +\int_{0}^{t+1} z^{-\frac{d}{2s}}    2|t-z|^{-\frac{d}{2}({\frac{1}{p}-\frac{1}{s^\prime}+\frac{1}{d}})}   dz\cr
 		&\le&\dfrac{2}{\beta^{1-\frac{d}{2s}}} \Gamma\left(1-\frac{d}{2s} \right) +\dfrac{2}{\beta}    + \int_{0}^{t+1} z^{-\frac{d}{2s}}    2z^{-\frac{d}{2}({\frac{1}{p}-\frac{1}{s^\prime}+\frac{1}{d}})} dz \;\, (\hbox{by } |z-t|>z>0)
 		\cr
 		&\le& M_{2a}  + \dfrac{2}{\frac{1}{2}-\frac{d}{2r}+\frac{d}{2s^\prime}}:=M_{2b}.
 	\end{eqnarray*}
 	
 	In the case $0<t\le 1$, 
 	\begin{eqnarray*}
 		G_2(t) &\lesssim& \int_{-\infty}^{-1}2e^{-\beta(t-\tau)}d\tau   +\int_{-1}^0  \left(  (t- \tau)^{-\frac{d}{2s}} +1\right)  2|\tau|^{-\frac{d}{2}({\frac{1}{p}-\frac{1}{s^\prime}+\frac{1}{d}})}   d\tau\cr
 		&+&\int_0^t   (t- \tau)^{-\frac{d}{2s}}  \left[  \tau^{-\frac{d}{2}({\frac{1}{p}-\frac{1}{s}+\frac{1}{d}})} + \tau^{-\frac{d}{2}({\frac{1}{p}-\frac{1}{\tilde p}+\frac{1}{d}})} \right]  d\tau\cr
 		&\le& \dfrac{2}{\beta}e^{-\beta(1+t)}    +\int_{-1}^0  \left(  (- \tau)^{-\frac{d}{2s}} +1\right)  2|\tau|^{-\frac{d}{2}({\frac{1}{p}-\frac{1}{s^\prime}+\frac{1}{d}})}   d\tau
 		\cr
 		&+&\int_{0}^{t}\left( 1-\dfrac{ \tau}{t}\right) ^{-\frac{d}{2s}}  2\left( \dfrac{\tau}{t}\right) ^{-\frac{d}{2}({\frac{1}{p}-\frac{1}{s}+\frac{1}{d}})}.t^{-\frac{d}{2p}-\frac{1}{2}}   d\tau\;\, (\text{because } \tilde p<s)\cr 
 		&\le& \dfrac{2}{\beta}  +\dfrac{2}{\frac{1}{2}-\frac{d}{2r}+\frac{d}{2s^\prime}}   +\dfrac{2}{ \theta_{s^\prime}}    
 		+2\mathbf{B}\left(1-\frac{d}{2s}, \theta_s \right):=M_{2c}.
 	\end{eqnarray*}

 	In the case $t>1$,  
 	\begin{eqnarray*}
 G_2(t) &\lesssim& \int_{-\infty}^{-1}2e^{-\beta(t-\tau)}d\tau   +\int_{-1}^0 2|\tau|^{-\frac{d}{2}({\frac{1}{p}-\frac{1}{s^\prime}+\frac{1}{d}})} d\tau
 		+\int_0^{\frac{1}{2}}   \left( 2^{\frac{d}{2s}}+1\right)   2\tau^{-\frac{d}{2}({\frac{1}{p}-\frac{1}{s}+\frac{1}{d}})}  d\tau \cr
 		&+&\int_{\frac{1}{2}}^1   \left( (t- \tau)^{-\frac{d}{2s}}+1\right)   2.2^{\frac{d}{2}({\frac{1}{p}-\frac{1}{s}+\frac{1}{d}})} e^{-\beta(t-\tau)}  d\tau \; (\text{because } \tilde{p}<s)\cr
 		&+&\int_1^t   \left( (t- \tau)^{-\frac{d}{2s}}+1\right)   2e^{-\beta(t-\tau)}d\tau  \cr
 &\le& \dfrac{2}{\beta}e^{-\beta(1+t)}  +\dfrac{2}{\theta_s^\prime}    +\left( 2^{\frac{d}{2s}}+1\right)2. \dfrac{2^{-\theta_s}}{\theta_s}    +\left[2. 2^{1-\theta_s}+2\right]  \left(\dfrac{1}{\beta}\mathbf\Gamma\left(1-\dfrac{d}{2s} \right) +\dfrac{1}{\beta} \right)  \cr
 &\le& \dfrac{2}{\beta}  +\dfrac{2}{\theta_s^\prime}    +\left( 2^{\frac{d}{2s}}+1\right)2. \dfrac{2^{-\theta_s}}{\theta_s}    +\left[2. 2^{1-\theta_s}+2\right]  \left(\dfrac{1}{\beta}\mathbf\Gamma\left(1-\dfrac{d}{2s} \right) +\dfrac{1}{\beta} \right):=M_{2d}.
 	\end{eqnarray*}
 	Therefore,
 	$G_2(t)\lesssim M_2:= \max\left\{  M_{2b},\, M_{2c},\, M_{2d}\right\}<+\infty$ for all $t\in \mathbb R.$

    {We now estimate the integrals $G_{31},\, G_{32}$ and give the corresponding precise values of $M_{31}$ and $M_{32}$ as follows.}
 	\begin{eqnarray*}
 		G_{31}(t) &=& \tilde{ \lambda}(t)\int_{-\infty}^t [c_d(t-\tau)]^{\left( \frac{1}{p}-\frac{1}{\tilde p}+ \frac{1}{d}\right)}e^{-\beta(t-\tau)} d\tau,\cr
 		G_{32}(t) &=& \tilde{ \lambda}(t)\int_{-\infty}^t [c_d(t-\tau)]^{\frac{1}{r}-\frac{1}{\tilde p} + \frac{1}{d}}e^{-\beta(t- \tau)}\left(  \left[ \hat{ \lambda}(t)\right]^{-1} +\left[ \tilde{ \lambda}(t)\right]^{-1}  \right) d\tau.
 	\end{eqnarray*}

 	If $t\le -1$ then 
 	$$G_{31}(t) \lesssim \int_{-\infty}^t \left(  (t-\tau)^{{-\frac{d}{2}}\left( \frac{1}{p}-\frac{1}{\tilde p}+\frac{1}{d} \right)}+1\right) e^{-\beta(t-\tau)} d\tau =\dfrac{1 }{\beta^{\theta_{\tilde{p}}}}\mathbf \Gamma\left(\theta_{\tilde{p}} \right)  +\dfrac{1}{\beta}:=M_{31a}.$$

 	If $-1<t\le 0$ then 
 	\begin{eqnarray*}
 		G_{31}(t) &\lesssim&  \tilde{ \lambda}(t) \left[  \int_{-\infty}^{-1} \left(  (t-\tau)^{{-\frac{d}{2}}\left( \frac{1}{p}-\frac{1}{\tilde p}+\frac{1}{d} \right)}+1\right) e^{-\beta(t-\tau)} d\tau +
 		\int_{-1}^t   (t-\tau)^{{-\frac{d}{2}}\left( \frac{1}{p}-\frac{1}{\tilde p}+\frac{1}{d} \right)} d\tau  \right]\cr 
 		&\le&  \tilde{ \lambda}(t) \left[  \dfrac{1 }{\beta^{\theta_{\tilde{p}}}}\mathbf \Gamma\left(\theta_{\tilde{p}} \right)  +\dfrac{1}{\beta} +
 		\dfrac{(t+1)^{\theta_{\tilde{p}}}}{\theta_{\tilde{p}}}  \right]\cr
 		&\le&    \dfrac{1 }{\beta^{\theta_{\tilde{p}}}}\mathbf \Gamma\left(\theta_{\tilde{p}} \right)  +\dfrac{1}{\beta} +
 		\dfrac{1}{\theta_{\tilde{p}}} := M_{31b}.
 	\end{eqnarray*}

 	If $0<t \le 1$ then 
 	\begin{eqnarray*}
 		G_{31}(t) &\lesssim&  \tilde{ \lambda}(t)  \left[ \int_{-\infty}^{-1}  e^{-\beta(t-\tau)} d\tau  +  \int_{-1}^t \left(   (t-\tau)^{{-\frac{d}{2}}\left( \frac{1}{p}-\frac{1}{\tilde p}+\frac{1}{d} \right)} +1\right) d\tau  \right]\cr 
 		&\le&  \tilde{ \lambda}(t) \left[\dfrac{1}{\beta} +  
 		\dfrac{(t+1)^{\theta_{\tilde{p}}}}{\theta_{\tilde{p}}} +t+1 \right]\le \left[\dfrac{1}{\beta} +  
 		\dfrac{2^{\theta_{\tilde{p}}}}{\theta_{\tilde{p}}} +2 \right]:=M_{31c}.
 	\end{eqnarray*}

 	If $t > 1$ then 
 	\begin{eqnarray*}
 		G_{31}(t) &\lesssim& \int_{-\infty}^{-1}  e^{-\beta(t-\tau)} d\tau   +\int_{-1}^0  d\tau  +\int_{0}^t \left(   (t-\tau)^{{-\frac{d}{2}}\left( \frac{1}{p}-\frac{1}{\tilde p}+\frac{1}{d} \right)} +1\right)e^{-\beta(t-\tau)} d\tau \cr
 		&\le&  \left[\dfrac{1}{\beta} +1+  
 		\dfrac{1}{\beta^{\theta_{\tilde{p}}}}\mathbf\Gamma(\theta_{\tilde{p}})   +\dfrac{1}{\beta} \right]:=M_{31d}.
 	\end{eqnarray*}
 	Thus, $G_{31}(t)\lesssim M_{31}:=\max{\left\{M_{31b},M_{31c},M_{31d}\right\}}<+\infty$ for all $t\in \mathbb R.$

 	We continue to estimate  $G_{32}.$

 	If $t\le -1$ then
 	\begin{eqnarray*} 
 		G_{32}(t) &\lesssim& \int_{-\infty}^t   \left( (t-\tau)^{-\frac{d}{2}\left( {\frac{1}{r}-\frac{1}{\tilde p} + \frac{1}{d}}\right)}+1\right) 2e^{-\beta(t-\tau)} d\tau  =\dfrac{2}{\beta^{\theta_r}}\mathbf{\Gamma}\left( \theta_r\right) +\dfrac{2}{\beta}:= M_{32a}.
 	\end{eqnarray*}

 	If $-1<t<0$ then
 	\begin{eqnarray*} 
 		G_{32}(t) &\lesssim&  \int_{-\infty}^{-1}   \left( (t-\tau)^{-\frac{d}{2}\left( {\frac{1}{r}-\frac{1}{\tilde p} + \frac{1}{d}}\right)}+1\right) 2e^{-\beta(t-\tau)} d\tau\cr
 		&+&\tilde{ \lambda}(t)\int_{-1}^t  (t-\tau)^{-\frac{d}{2}\left( {\frac{1}{r}-\frac{1}{\tilde p} + \frac{1}{d}}\right)}2  |\tau|^{-\frac{d}{2}({\frac{1}{p}-\frac{1}{s^\prime}+\frac{1}{d}})}  d\tau\cr
 		&\le& M_{32a}  -2|t|^{\theta_r} \int_{-1}^t \left( \dfrac{\tau}{t}-1\right)  ^{-\frac{d}{2}\left( {\frac{1}{r}-\frac{1}{\tilde p} + \frac{1}{d}}\right)} \left(\dfrac{\tau}{t} \right) ^{-\frac{d}{2}({\frac{1}{p}-\frac{1}{s^\prime}+\frac{1}{d}})} d\left(\dfrac{\tau}{t} \right)  
 		,\;\, ( \hbox{because  }   \dfrac{\tau}{t} >1) 
 		\cr
 		&\le& M_{32a}  -2|t|^{\theta_r} \int_{-1}^t \left( \dfrac{\tau}{t}\right)  ^{-\frac{d}{2}\left( {\frac{1}{r}-\frac{1}{\tilde p} + \frac{1}{d}}\right)} \left(\dfrac{\tau}{t} \right) ^{-\frac{d}{2}({\frac{1}{p}-\frac{1}{s^\prime}+\frac{1}{d}})} d\left(\dfrac{\tau}{t} \right)  
 		\cr
 		&\le& M_{32a}  +2  .\dfrac{|t|^{\theta_r}-|t|^{1-\theta_{s^\prime}}}{1-\theta_r-\theta_{s^\prime}} \le  M_{32a}  +\dfrac{2}{1-\theta_r-\theta_{s^\prime}}:=M_{32b}.
 	\end{eqnarray*}

 	If $0<t\leq1$ then
 	\begin{eqnarray*} 
 		G_{32}(t) &\lesssim& \int_{-\infty}^{-1}   \left( (t-\tau)^{-\frac{d}{2}\left( {\frac{1}{r}-\frac{1}{\tilde p} + \frac{1}{d}}\right)}+1\right) 2e^{-\beta(t-\tau)} d\tau\cr
 		&+&\tilde{ \lambda}(t)\int_{-1}^0  (t-\tau)^{-\frac{d}{2}\left( {\frac{1}{r}-\frac{1}{\tilde p} + \frac{1}{d}}\right)}2  |\tau|^{-\frac{d}{2}({\frac{1}{p}-\frac{1}{s^\prime}+\frac{1}{d}})}  d\tau\cr
 		&+&\tilde{ \lambda}(t) \int_0^t   (t-\tau)^{-\frac{d}{2}\left( {\frac{1}{r}-\frac{1}{\tilde p} + \frac{1}{d}}\right)}\left( \tau^{-\frac{d}{2}({\frac{1}{p}-\frac{1}{s}+\frac{1}{d}})} + \tau^{-\frac{d}{2}({\frac{1}{p}-\frac{1}{\tilde p}+\frac{1}{d}})} \right) d\tau\cr
 		&\le& M_{32a}   +\tilde{ \lambda}(t)\int_{-1}^0  (-\tau)^{-\frac{d}{2}\left( {\frac{1}{r}-\frac{1}{\tilde p} + \frac{1}{d}}\right)}2  |\tau|^{-\frac{d}{2}({\frac{1}{p}-\frac{1}{s^\prime}+\frac{1}{d}})}  d\tau\cr
 		&+&\tilde{ \lambda}(t) \int_0^t   (t-\tau)^{-\frac{d}{2}\left( {\frac{1}{r}-\frac{1}{\tilde p} + \frac{1}{d}}\right)}2 \tau^{-\frac{d}{2}({\frac{1}{p}-\frac{1}{s}+\frac{1}{d}})}d\tau\cr
 		&\le& M_{32a}   +2t^{\frac{1}{2}-\frac{d}{2r}+ \frac{d}{2s^\prime}}  +2t^{\frac{1}{2}-\frac{d}{2p}} \mathbf B\left( \theta_r,\theta_s\right)  
 		\le  M_{32a}   +2 
 		+ 2\mathbf B\left( \theta_r,\theta_s\right) :=M_{32c}.
 	\end{eqnarray*}
 	
 	If $1< t$ then
 	\begin{eqnarray*} 
 		G_{32}(t) &\lesssim& \int_{-\infty}^{-1}    2e^{-\beta(t-\tau)} d\tau  + \int_{-1}^0   2  |\tau|^{-\frac{d}{2}({\frac{1}{p}-\frac{1}{s^\prime}+\frac{1}{d}})}  d\tau\cr
 		&+&\tilde{ \lambda}(t) \int_0^1   \left( (t-\tau)^{-\frac{d}{2}\left( {\frac{1}{r}-\frac{1}{\tilde p} + \frac{1}{d}}\right)}+1\right)  \left( \tau^{-\frac{d}{2}({\frac{1}{p}-\frac{1}{s}+\frac{1}{d}})} + \tau^{-\frac{d}{2}({\frac{1}{p}-\frac{1}{\tilde p}+\frac{1}{d}})} \right) d\tau\cr
 		&+& \int_{1}^{t}    2e^{-\beta(t-\tau)} d\tau \cr
 		&\le&  \dfrac{2}{\beta}+ \dfrac{2}{\theta_{s^\prime}}+    \int_0^1   \left( (1-\tau)^{-\frac{d}{2}\left( {\frac{1}{r}-\frac{1}{\tilde p} + \frac{1}{d}}\right)}+1\right)    2\tau^{-\frac{d}{2}({\frac{1}{p}-\frac{1}{s}+\frac{1}{d}})}  d\tau + \dfrac{2}{\beta}\cr 
 		&\le&  \dfrac{4}{\beta}+ \dfrac{2}{\theta_{s^\prime}}+ 2\mathbf B\left( \theta_r,\theta_{s}\right) + \dfrac{2}{\theta_{\tilde{p}} }:=M_{32d}.
 	\end{eqnarray*}

 	Therefore,  $ G_{32}(t)\lesssim M_{32} := \max\left\{M_{32b}, M_{32c},M_{32d}\right\}<+\infty$ for all $t\in \mathbb R.$
 	
 	{Moreover, we continue to give the estimation for the integrals  } 
 	\begin{align*}
 		G_{41}(t) &:= \hat{ \lambda}(t)\int_{-\infty}^t [c_d(t-\tau)]^{ \frac{1}{p}-\frac{1}{s} +\frac{1}{d}}e^{-\beta(t-\tau)} d\tau,\cr
 		G_{42}(t) &:=\hat{ \lambda}(t) \int_{-\infty}^t [c_d(t-\tau)]^{\frac{1}{s} + \frac{1}{d}}\left( \left[\hat{ \lambda}(\tau)\right] ^{-1} +\left[ \tilde{ \lambda}(\tau)\right] ^{-1} \right) e^{-\beta(t- \tau)} d\tau
 	\end{align*}
 as follows.	  For the boundedness of  $G_{41}$, 
 	
 	For the case $t\le -1$,
 	\begin{eqnarray*}
 		G_{41}(t) &\lesssim& \int_{-\infty}^t   (t-\tau)^{{-\frac{d}{2}}\left( \frac{1}{p}-\frac{1}{s}+\frac{1}{d} \right)} e^{-\beta(t-\tau)}d\tau =\dfrac{1}{\beta^{\theta_s}}\mathbf\Gamma \left(\theta_s\right):=M_{41a}.
 	\end{eqnarray*}
 	
 	For the case $-1<t\le0$,
 	\begin{eqnarray*}
 		G_{41}(t) &\lesssim& \int_{-\infty}^{-1}   (t-\tau)^{{-\frac{d}{2}}\left( \frac{1}{p}-\frac{1}{s}+\frac{1}{d} \right)} e^{-\beta(t-\tau)}d\tau 
 		+ \int_{-1}^t   (t-\tau)^{{-\frac{d}{2}}\left( \frac{1}{p}-\frac{1}{s}+\frac{1}{d} \right)} d\tau\cr 
 		&\le&\dfrac{1}{\beta^{\theta_s}}\mathbf\Gamma \left(\theta_s\right)+ \frac{1}{\theta_s}:=M_{41b}.
 	\end{eqnarray*}
 	
 	For the case $0<t\le1$,
 	\begin{eqnarray*}
 		G_{41}(t) &\lesssim& \int_{-\infty}^{-1} e^{-\beta(t-\tau)}d\tau 
 		+ \int_{-1}^t   (t-\tau)^{{-\frac{d}{2}}\left( \frac{1}{p}-\frac{1}{s}+\frac{1}{d} \right)} d\tau \cr 
 		&\le&\dfrac{1}{\beta } + \frac{(t+1)^{\theta_s}}{\theta_s}\le \dfrac{1}{\beta } + \frac{2^{\theta_s}}{\theta_s}:=M_{41c}.
 	\end{eqnarray*}
 	
 	For the case $t > 1$, 
 	\begin{eqnarray*}
 		G_{41}(t) &\lesssim& \int_{-\infty}^0 e^{-\beta(t-\tau)}d\tau 
 		+  \int_0^t \left[ (t-\tau)^{{-\frac{d}{2}}\left( \frac{1}{p}-\frac{1}{s}+\frac{1}{d} \right)} +1 \right] e^{-\beta(t-\tau)}d\tau\cr 
 		&\le&\dfrac{1}{\beta } +\dfrac{1}{\beta^{\theta_s}}\mathbf\Gamma \left(\theta_s\right)+ \frac{1}{\beta}:=M_{41d}.
 	\end{eqnarray*}
 	Hence,
 	$G_{41}(t)\lesssim M_{41}:= \max{\left\{M_{41b}, M_{41c}, M_{41d}\right\}}<+\infty$ for all $t\in \mathbb R.$

 	Finally, we give the estimation for $G_{42}$.

 	For the case $t\le -1$,
 	\begin{eqnarray*}
 		G_{42}(t) &\lesssim& \int_{-\infty}^t  \left(  (t-\tau)^{-\frac{d}{2}\left( {\frac{1}{s} + \frac{1}{d}}\right)}+1\right) 2 e^{-\beta(t-\tau)}d\tau  = \dfrac{2\mathbf\Gamma\left(\frac{1}{2}-\frac{d}{2s}\right)  }{\beta^{\frac{1}{2}-\frac{d}{2s}}}  +\dfrac{2}{\beta}:=M_{42a}.
 		\cr 
 	\end{eqnarray*}
 	
 	For the case $-1<t\le0$, 
 	\begin{eqnarray*}
 		G_{42}(t) &\lesssim& \int_{-\infty}^{-1}  \left(  (t-\tau)^{-\frac{d}{2}\left( {\frac{1}{s} + \frac{1}{d}}\right)}+1\right) 2 e^{-\beta(t-\tau)}d\tau 
 		\cr 
 		&+& \hat{ \lambda}(t)\int_{-1}^t   \left(  (t-\tau)^{-\frac{d}{2}\left( {\frac{1}{s} + \frac{1}{d}}\right)}+1\right) 2 |\tau|^{-\frac{d}{2}({\frac{1}{p}-\frac{1}{s^\prime}+\frac{1}{d}})} d\tau\cr 
 		&\leq& \dfrac{2\mathbf\Gamma\left(\frac{1}{2}-\frac{d}{2s}\right)  }{\beta^{\frac{1}{2}-\frac{d}{2s}}}  +\dfrac{2}{\beta}   +\hat{ \lambda}(t)\int_{-1}^t    \left(  |\tau|^{-\frac{d}{2}\left( {\frac{1}{s} + \frac{1}{d}}\right)}+1\right) 2 |\tau|^{-\frac{d}{2}({\frac{1}{p}-\frac{1}{s^\prime}+\frac{1}{d}})} d\tau
 		\cr 
 		&\leq& \dfrac{2\mathbf\Gamma\left(\frac{1}{2}-\frac{d}{2s}\right)  }{\beta^{\frac{1}{2}-\frac{d}{2s}}}  +\dfrac{2}{\beta}   + 2\left(   \dfrac{|t|^{\frac{1}{2}-\frac{d}{2s}}}{\frac{d}{2r}-\frac{d}{2s^\prime}}+\dfrac{1}{\theta_{s^\prime}}\right)\cr
 		&\le& \dfrac{2\mathbf\Gamma\left(\frac{1}{2}-\frac{d}{2s}\right)  }{\beta^{\frac{1}{2}-\frac{d}{2s}}}  +\dfrac{2}{\beta}   +      \dfrac{2}{\frac{d}{2r}-\frac{d}{2s^\prime}}+\dfrac{2}{\theta_{s^\prime}}:=M_{42b}.  
 	\end{eqnarray*}

 	For the case $0<t\le1$, 
 	\begin{eqnarray*}
 		G_{42}(t) &\lesssim& \int_{-\infty}^{-1}    2 e^{-\beta(t-\tau)}d\tau 
 		+ \hat{ \lambda}(t)\int_{-1}^0   \left(  (t-\tau)^{-\frac{d}{2}\left( {\frac{1}{s} + \frac{1}{d}}\right)}+1\right) 2 |\tau|^{-\frac{d}{2}({\frac{1}{p}-\frac{1}{s^\prime}+\frac{1}{d}})} d\tau \cr 
 		&+&\hat{ \lambda}(t)\int_0^t   \left(  (t-\tau)^{-\frac{d}{2}\left( {\frac{1}{s} + \frac{1}{d}}\right)}+1\right) \left(  \tau^{-\frac{d}{2}({\frac{1}{p}-\frac{1}{s}+\frac{1}{d}})} +\tau^{-\frac{d}{2}({\frac{1}{p}-\frac{1}{\tilde{p}}+\frac{1}{d}})}\right)  d\tau    \cr 
 		&\le& \int_{-\infty}^{-1}    2 e^{-\beta(t-\tau)}d\tau 
 		+ \hat{ \lambda}(t)\int_{-1}^0   \left(  |\tau|^{-\frac{d}{2}\left( {\frac{1}{s} + \frac{1}{d}}\right)}+1\right) 2 |\tau|^{-\frac{d}{2}({\frac{1}{p}-\frac{1}{s^\prime}+\frac{1}{d}})} d\tau \cr 
 		&+&\hat{ \lambda}(t)\int_0^t   \left(  (t-\tau)^{-\frac{d}{2}\left( {\frac{1}{s} + \frac{1}{d}}\right)}+1\right) 2 \tau^{-\frac{d}{2}({\frac{1}{p}-\frac{1}{s}+\frac{1}{d}})} d\tau    \cr
 		&\le&    \dfrac{2}{\beta}   +      \dfrac{2}{\frac{d}{2r}-\frac{d}{2s^\prime}}+\dfrac{2}{\theta_{s^\prime}}+t^{\frac{1}{2}-\frac{d}{2s}}\mathbf B\left(\frac{1}{2}-\frac{d}{2s},\theta_s\right) +\dfrac{2}{\theta_s} \cr
 		&\le&    \dfrac{2}{\beta}   +      \dfrac{2}{\frac{d}{2r}-\frac{d}{2s^\prime}}+\dfrac{2}{\theta_{s^\prime}}+\mathbf B\left(\frac{1}{2}-\frac{d}{2s},\theta_s\right) +\dfrac{2}{\theta_s}:=M_{42c}.  
 	\end{eqnarray*}

 	For the case $t>1$,
 	\begin{eqnarray*}
 		G_{42}(t)&\lesssim& \int_{-\infty}^{-1}    2 e^{-\beta(t-\tau)}d\tau 
 		+  \int_{-1}^0 2 |\tau|^{-\frac{d}{2}({\frac{1}{p}-\frac{1}{s^\prime}+\frac{1}{d}})} d\tau \cr 
 		&+&\int_0^1   \left(  (t-\tau)^{-\frac{d}{2}\left( {\frac{1}{s} + \frac{1}{d}}\right)}+1\right) 2 \tau^{-\frac{d}{2}({\frac{1}{p}-\frac{1}{s}+\frac{1}{d}})} d\tau   +\int_1^t    2 e^{-\beta(t-\tau)}d\tau \cr
 		&\le&    \dfrac{2}{\beta} +\dfrac{2}{\theta_{s^\prime}}+t^{\frac{1}{2}-\frac{d}{2p}}\mathbf B\left(\frac{1}{2}-\frac{d}{2s},\theta_s\right) +\dfrac{2}{\theta_s}+\dfrac{2}{\beta} \cr
 		&\le&    \dfrac{4}{\beta}    +\dfrac{2}{\theta_{s^\prime}}+\mathbf B\left(\frac{1}{2}-\frac{d}{2s},\theta_s\right) +\dfrac{2}{\theta_s}:=M_{42d}.  
 	\end{eqnarray*}
Therefore, $G_{42}(t) \lesssim M_{42}:=\max\left\{ M_{42b}, M_{42c}, M_{42d}\right\}<+\infty$ for all $t\in \mathbb R.$\\
{\bf Conflict of interest.} The authors declare that there are no conflicts of interest.


\begin{thebibliography}{HD}

\bibitem{Che} D.N. Cheban, {\it Asymptotically Almost Periodic Solutions of Differential Equations}, Hindawi Publishing Corporation, New York (2009).

\bibitem{Co} T. Coulhon, {\it Heat kernel estimates, Sobolev-type inequalities and Riesz transform on noncompact Riemannian manifolds}, Analysis and geometry of metric measure spaces, 55-65, CRM Proc.
Lecture Notes, 56, Amer. Math. Soc., Providence, RI, (2013).

\bibitem{Cz1} M. Czubak and C.H. Chan, {\it Non-uniqueness of the Leray–Hopf solutions in the hyperbolic setting}, Dyn. PDE 10(1), 43–77 (2013).

\bibitem{Cz2} M. Czubak and C.H. Chan, {\it Remarks on the weak formulation of the Navier–Stokes equations on the $2$D-hyperbolic space}, Annales de l'Institut Henri Poincare (C) Non Linear Analysis, Volume 33, Issue 3, May-June 2016, Pages 655-698.

\bibitem{Cz3} M. Czubak, C.H. Chan and M. Disconzi, {\it The formulation of the Navier-Stokes equations on Riemannian manifolds} In: J. Geom. Phys. 121 (2017), pp. 335–346.



\bibitem{DalKre} Ju. L. Daleckii, M. G. Krein, {\it Stability of Solutions of Differential
Equations in Banach Spaces}. Transl. Amer. Math. Soc.  Provindence RI, {1974.}

\bibitem{Da} E. Damek and F. Ricci, {\it A class of nonsymmetric harmonic Riemannian spaces}, Bull. Amer. Math. Soc. 27 (1992), 139-142.

\bibitem{Druet} O. Druet, {\it Nonlinear analysis on manifolds}, Lectures Notes, East China Normal University, Shanghai (2005)

\bibitem{EbiMa} D.G. Ebin, J.E. Marsden, {\it Groups of diffeomorphisms and the motion of an incompressible fluid}, Ann. of Math. (2) 92 (1970), 102-163.

\bibitem{Er} P. Erbelein, {\it Geometry of non positively curved manifolds}, Chicago Lectures in Mathematics, 449, (1996).

\bibitem{Fa2018} S. Fang and D. Luo, {\it Constantin and Iyer’s representation formula for the Navier-Stokes equations on manifolds}, Potential Analysis, 48: 181-206 (2018).

\bibitem{Fa2020} S. Fang, {\it Nash Embedding, Shape Operator and Navier-Stokes Equation on a Riemannian Manifold}, Acta Mathematicae Applicatae Sinica, English Series Vol. 36, No. 2 (2020) 237-252.

\bibitem{FaTa} R. Farwig and Y. Tanuichi, {\it Uniqueness of backward asymptotically almost periodic in time solutions
to Navier-Stokes equations in unbounded domains}, Discrete and continuous dynamical systems ser. S, Vol. 6, Num. 5, Oct. (2013) 1215-1224

\bibitem{FuKa} H. Fujita and T. Kato, {\it On the Navier-Stokes initial value problem}, I, Arch. Rat. Mech. Anal. 16,(1961), 269-315.
 

\bibitem{He} E. Hebey, {\it Nonlinear Analysis on Manifolds: Sobolev Spaces and Inequalities, Courant Lectures in Mathematics}, New York (2000) AMS.

\bibitem{Hel} S. Helgason, {\it Geometric Analysis on Symmetric Spaces}, Amer. Math. Soc., 1994.

 
\bibitem{Jo} J. Jost, {\it Riemannian geometry and geometric analysis}, Universitext. Springer-Verlag, Berlin, fifth edition (2008)


\bibitem{Ka} T. Kato, {\it Strong $L^p$ solutions of the Navier-Stokes equations in $\mathbb{R}^m$ with applications to weak solutions}, Math. Zeit. 187, (1984), 471480.

\bibitem{Khesin2012} B. Khesin, G. Misiolek, {\it The Euler and Navier-Stokes equations on the hyperbolic plane,} Proc. Natl.Acad. Sci. 109 (2012) 18324-18326.

\bibitem{Ko} K. Kodaira, {\it Harmonic fields in Riemannian manifolds (generalized potential theory)}, Ann. of Math. 2 No. 50 (1949) 587-665.

\bibitem{KoNa} H. Kozono and M. Nakao, {\it Periodic solution of the Navier-Stokes equations in unbounded domains}, T\^ohoku Math. J., 48 (1996), 33-50.


\bibitem{Li2016} L. A. Lichtenfelz, {\it Nonuniqueness of solutions of the Navier–Stokes equations on Riemannian manifolds}, Annals of Global Analysis and Geometry volume 50, pages237–248(2016).

\bibitem{Loho} N. Lohou\'e, {\it Estimation des projecteurs de De Rham Hodge de certaines vari\'et\'e riemanniennes non compactes}, Math. Nachr. 279, 3 (2006), 272-298.

\bibitem{Me} J. Massera, {\it The existence of periodic solutions of systems of differential equations},
Duke Math. J. 17, 457-475 (1950).

\bibitem{MiTa2001} M. Mitrea and M. Taylor, {\it Navier-Stokes equations on Lipschitz domains in Riemannian manifolds}, Math Ann 321, 955-987 (2001)


\bibitem{HuyXuan2018} N.T. Huy, P.T. Xuan, V.T.N. Ha and V.T. Mai, {\it Periodic Solutions of Navier-Stokes Equations on Non-compact Einstein Manifolds with Negative Ricci Curvature Tensor}, Analysis and Mathematical Physics (2021) 11:60

\bibitem{HuyXuan2022} N.T. Huy, P.T. Xuan, V.T.N. Ha and N.T. Van, {\it On periodic solutions for the incompressible NavierStokes equations on non-compact manifolds}, Taiwan. J. Math., Vol. 26, No. 3 (2022), pages 607-633

\bibitem{Pi} V. Pierfelice, {\it The incompressible Navier-Stokes equations on non-compact manifolds}, Journal of Geometric Analysis, 27(1) (2017), 577-617.


\bibitem{Sa} M. Samavaki and J. Tuomela, {\it Navier–Stokes equations on Riemannian manifolds}, Journal of Geometry and Physics 148 (2020) 103543.


\bibitem{Tay}  M. Taylor, {\it Partial Differential Equations III: Nonlinear equations},volume 117 of Applied Mathematical Sciences. Springer New York second edition (2011).


\bibitem{Xuan2022} P.T. Xuan, N.T. Van and B. Quoc, {\it On Asymptotically Almost Periodic Solution of Parabolic Equations on real hyperbolic Manifolds}, Journal of Mathematical Analysis and Applications, Vol. 517, Iss. 1, 1 January 2023, 126578

\bibitem{Xuan2023} P.T. Xuan and N.T. Van, {\it On asymptotically almost periodic solutions to the Navier-Stokes equations on hyperbolic manifolds}, Journal of Fixed Point Theory and Applications, Vol. 25, No. 71 (2023), 33 pages


\bibitem{Zha} Q.S. Zhang, {\it The ill-posed Navier-Stokes equation on connected sums of $\mathbb{R}^3$}, Complex Var. Elliptic Eq. 51 (2006), 1059-1063.

\end{thebibliography}
\end{document}